\documentclass[letterpaper,12pt,oneside]{article}

\usepackage{a4wide}
\usepackage[utf8]{inputenc} 
\usepackage[T1]{fontenc} 
\usepackage{textcomp} 
\usepackage[english,french]{babel} 
\usepackage{amsmath}
\usepackage{amssymb}
\usepackage{amsthm}
\usepackage{graphicx}
\usepackage{url}
\usepackage{hyperref}
\usepackage[all]{xy}
\usepackage{nccmath}
\usepackage{color}
\usepackage{mathrsfs,xspace}
\usepackage{comment}
\usepackage{dsfont}
\usepackage{tikz}
\usetikzlibrary{arrows.meta, positioning}
\usetikzlibrary{overlay-beamer-styles}
\usetikzlibrary{fit}
\allowdisplaybreaks

\DeclareMathOperator{\N}{\mathbb{N}}

\DeclareMathOperator{\Homr}{%
\mathscr{H}\kern-0.3em\textit{\footnotesize\kern-0.2em o\kern-0.12em m}}

\theoremstyle{definition}
\newtheorem{definition}{Definition}[section]

\newtheorem{remark}[definition]{Remark}

\theoremstyle{plain}
\newtheorem{proposition}[definition]{Proposition}
\newtheorem{theorem}[definition]{Theorem}
\newtheorem{lemma}[definition]{Lemma}
\newtheorem{corollary}[definition]{Corollary}

\title{On the minimum number of chips allowing an infinite Chip-firing game}
\author{ Christophe Dubussy\footnote{Department of mathematics, University of Namur, christophe.dubussy@unamur.be} \, and Merlin Michalski\footnote{Department of mathematics, University of Namur, merlin.michalski@student.unamur.be}}
\date{\today}

\begin{document}
\selectlanguage{english}
\maketitle

\begin{abstract}
In this article, we provide three formulas allowing to compute the minimum amount of initial chips leading to an infinite Chip-firing game, answering a question originally posed by Björner and Lovász in 1992. These formulas hold for strongly connected directed loop-free multigraphs and generalize what was already known in the Eulerian case. The various proofs heavily rely on a notion of \textit{dynamical bound}, which allows to encode some specific sequences of chip configurations. 
\end{abstract}

\section{Introduction}

The Chip-firing game is a local dynamical game introduced on undirected graphs in \cite{Bjor91} and on directed multigraphs in \cite{Bjor92}, which is highly connected to many subjects of discrete mathematics, such as the abelian sandpile model, Petri nets, abstract rewriting systems and much more. A good review of all these interconnections can be found in \cite{Kliv18}. The underlying dynamical process of the Chip-firing game is defined as follows. Let $G = (V, E)$ be a directed graph with a certain amount of chips initially distributed among its vertices. At each step of the game, one chooses a vertex that has more chips than neighbors and this vertex distributes (\textit{fires}) one chip to each of its neighbors.

\begin{figure}
\centering
\begin{tikzpicture}[
    vertex/.style={
        circle,
        draw,
        inner sep=1.5pt,
        minimum size=18pt
    },
    arrow/.style={
        ->,
        >=Stealth,
        thick
    }
]

\node[vertex] (A) at (1,8) {2};
\node[vertex] (B) at (1,7) {0};
\node[circle,draw,inner sep=1.5pt,minimum size=18pt, color=red] (C) at (0,6) {1};
\node[vertex] (D) at (2,6) {0};

\draw[arrow] (B) -- (A);
\draw[arrow] (C) -- (B);
\draw[arrow] (D) -- (C);
\draw[arrow] (D) -- (B);

\draw[arrow] (0.83,7.7) -- (0.83,7.3);
\draw[arrow] (1.17,7.7) -- (1.17,7.3);
\draw[arrow] (1.1,6.7) -- (1.7, 6.1);
\draw[arrow] (1.3, 6.9) -- (1.9, 6.3);

\node[circle,draw,inner sep=1.5pt,minimum size=18pt, color=red] (E) at (5,8) {2};
\node[vertex] (F) at (5,7) {1};
\node[vertex] (G) at (4,6) {0};
\node[vertex] (H) at (6,6) {0};

\draw[arrow] (F) -- (E);
\draw[arrow] (G) -- (F);
\draw[arrow] (H) -- (G);
\draw[arrow] (H) -- (F);

\draw[arrow] (4.83,7.7) -- (4.83,7.3);
\draw[arrow] (5.17,7.7) -- (5.17,7.3);
\draw[arrow] (5.1,6.7) -- (5.7, 6.1);
\draw[arrow] (5.3, 6.9) -- (5.9, 6.3);

\draw[->, line width=1mm, >=Stealth] (2.5,7) -- (3.5,7);

\node[vertex] (I) at (9,8) {0};
\node[circle,draw,inner sep=1.5pt,minimum size=18pt, color=red] (J) at (9,7) {3};
\node[vertex] (K) at (8,6) {0};
\node[vertex] (L) at (10,6) {0};

\draw[arrow] (J) -- (I);
\draw[arrow] (K) -- (J);
\draw[arrow] (L) -- (K);
\draw[arrow] (L) -- (J);

\draw[arrow] (8.83,7.7) -- (8.83,7.3);
\draw[arrow] (9.17,7.7) -- (9.17,7.3);
\draw[arrow] (9.1,6.7) --  (9.7, 6.1);
\draw[arrow] (9.3, 6.9) -- (9.9, 6.3);

\draw[->, line width=1mm, >=Stealth] (6.5,7) -- (7.5,7);

\node[vertex] (1) at (9,4) {1};
\node[vertex] (2) at (9,3) {0};
\node[vertex] (3) at (8,2) {0};
\node[circle,draw,inner sep=1.5pt,minimum size=18pt, color=red] (4) at (10,2) {2};

\draw[arrow] (2) -- (1);
\draw[arrow] (3) -- (2);
\draw[arrow] (4) -- (3);
\draw[arrow] (4) -- (2);

\draw[arrow] (8.83,3.7) -- (8.83,3.3);
\draw[arrow] (9.17,3.7) -- (9.17,3.3);
\draw[arrow] (9.1, 2.7) -- (9.7, 2.1);
\draw[arrow] (9.3, 2.9) -- (9.9, 2.3);

\draw[->, line width=1mm, >=Stealth] (9,5.5) -- (9,4.5);

\node[vertex] (5) at (5,4) {1};
\node[vertex] (6) at (5,3) {1};
\node[circle,draw,inner sep=1.5pt,minimum size=18pt, color=red] (7) at (4,2) {1};
\node[vertex] (8) at (6,2) {0};

\draw[arrow] (6) -- (5);
\draw[arrow] (7) -- (6);
\draw[arrow] (8) -- (7);
\draw[arrow] (8) -- (6);

\draw[arrow] (4.83,3.7) -- (4.83,3.3);
\draw[arrow] (5.17,3.7) -- (5.17,3.3);
\draw[arrow] (5.1, 2.7) -- (5.7, 2.1);
\draw[arrow] (5.3, 2.9) -- (5.9, 2.3);

\draw[->, line width=1mm, >=Stealth] (7.5,3) -- (6.5,3);

\node[vertex] (9) at (1,4) {1};
\node[vertex] (10) at (1,3) {2};
\node[vertex] (11) at (0,2) {0};
\node[vertex] (12) at (2,2) {0};

\draw[arrow] (10) -- (9);
\draw[arrow] (11) -- (10);
\draw[arrow] (12) -- (11);
\draw[arrow] (12) -- (10);

\draw[arrow] (0.83,3.7) -- (0.83,3.3);
\draw[arrow] (1.17,3.7) -- (1.17,3.3);
\draw[arrow] (1.1, 2.7) -- (1.7, 2.1);
\draw[arrow] (1.3, 2.9) -- (1.9, 2.3);

\draw[->, line width=1mm, >=Stealth] (3.5,3) -- (2.5,3);

\end{tikzpicture}
\caption{An example of finite Chip-firing game}
\label{fig: rule chip-firing game}
\end{figure}
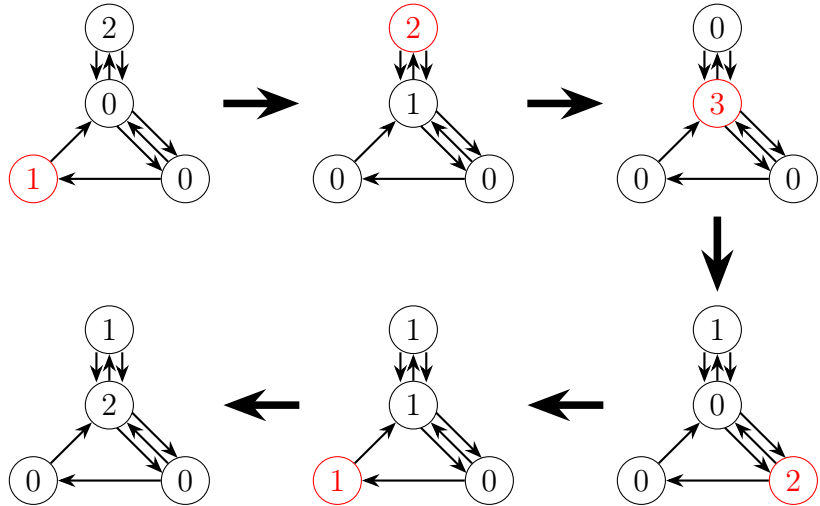

\medskip
Starting from an initial configuration, the game is said to be \textit{terminating}, or \textit{finite}, if no further distributions of chips are possible after a finite number of steps. Otherwise it is said to be \textit{non-terminating} or \textit{infinite}.  This dynamical process immediately raises two questions:

\begin{enumerate}
\item What number of initially distributed chips guarantees that any starting configuration leads
to a finite game?
\item What number of initially distributed chips guarantees that any starting configuration leads
to an infinite game?
\end{enumerate}

In \cite{Bjor91}, the authors definitively answer these questions in the undirected case. If $G$ has $N$ vertices and $M$ undirected edges, then the game is always infinite if strictly more than $2M-N$ chips are distributed, and always finite if strictly fewer than $M$ chips are distributed. Between
these two limits, there is always at least one starting configuration which makes the game finite and another configuration which makes the game infinite. In \cite{Bjor92}, it is noted that one of the two bounds remains trivially identical in the case of directed graphs. Indeed, if $G$ is a directed graph with $N$ vertices and $M$ directed edges, then, by the
pigeonhole principle, if we distribute strictly more than $M-N$ chips at the start, there will always be a vertex capable of firing a chip. The game is therefore infinite. On the other hand, the other question becomes particularly difficult. It can be reformulated as follows:

\medskip
\noindent
\begin{center}
\textit{Given a directed graph, what is the minimum number of chips that must be distributed at the start to guarantee the existence of at least one starting configuration that makes the game infinite?}
\end{center}

Throughout this article, this number will be denoted by c. As we said, in the undirected case we have $c=M.$ In the directed case, it has been proved in \cite{Bjor92} that, if the graph is strongly connected, $c \geq f$ where $f$ is the \textit{feedback number} of the graph, i.e. the minimum number of edges to remove in order to leave $G$ acyclic, and that one has the equality $c = f$ if $G$ is Eulerian.  It is thus immediately clear that it will not be possible to calculate $c$ in a polynomial time, even in the Eulerian case, since determining $f$ is a NP-hard problem \cite{Perr15}. However, the link exhibited with the feedback number paves the way for fast heuristic methods allowing the calculation of $c$ in the Eulerian case, see e.g. \cite{Bran11,Eade93,Gela23}. To the best of our knowledge, the non-Eulerian case has not been treated until now, even though the study of non-terminating configurations has important connections with the discrete Riemann-Roch theory \cite{Hujt19}. Therefore, the main purpose of this paper is to establish several explicit formulas for $c$, in the strongly connected case. 

\medskip
Our key idea is to go backwards, starting from a configuration where all the vertices have zero chips and, at each step, to select a vertex $v$ and determine the minimum number of chips that all the vertices would have had at the previous stage of the game if $v$ had been selected to fire. The sequence of selected vertices is called \textit{a strategy} and the sequence of backwards configurations is called \textit{a dynamical bound} (see Figure \ref{fig: dynamical bound} for an example). To such a bound, one can associate its \textit{gain}, i.e. the limit of the sum of all the chips on the graph as the number of steps tends to infinity. It is shown in Theorem~\ref{First formula} that $c$ is equal to the minimal gain among all the possible strategies. Obviously, such a formula is purely theoretical and not usable in any practical way.

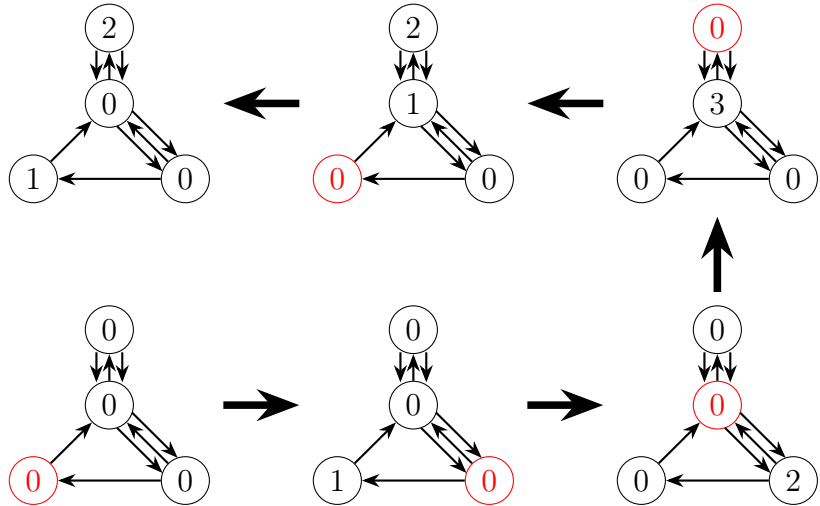
\begin{figure}[h]
\centering
\begin{tikzpicture}[
    vertex/.style={
        circle,
        draw,
        inner sep=1.5pt,
        minimum size=18pt
    },
    arrow/.style={
        ->,
        >=Stealth,
        thick
    }
]

\node[vertex] (A) at (1,8) {2};
\node[vertex] (B) at (1,7) {0};
\node[vertex] (C) at (0,6) {1};
\node[vertex] (D) at (2,6) {0};

\draw[arrow] (B) -- (A);
\draw[arrow] (C) -- (B);
\draw[arrow] (D) -- (C);
\draw[arrow] (D) -- (B);

\draw[arrow] (0.83,7.7) -- (0.83,7.3);
\draw[arrow] (1.17,7.7) -- (1.17,7.3);
\draw[arrow] (1.1,6.7) -- (1.7, 6.1);
\draw[arrow] (1.3, 6.9) -- (1.9, 6.3);

\node[vertex] (E) at (5,8) {2};
\node[vertex] (F) at (5,7) {1};
\node[circle,draw,inner sep=1.5pt,minimum size=18pt, color=red] (G) at (4,6) {0};
\node[vertex] (H) at (6,6) {0};

\draw[arrow] (F) -- (E);
\draw[arrow] (G) -- (F);
\draw[arrow] (H) -- (G);
\draw[arrow] (H) -- (F);

\draw[arrow] (4.83,7.7) -- (4.83,7.3);
\draw[arrow] (5.17,7.7) -- (5.17,7.3);
\draw[arrow] (5.1,6.7) -- (5.7, 6.1);
\draw[arrow] (5.3, 6.9) -- (5.9, 6.3);

\draw[->, line width=1mm, >=Stealth] (3.5,7) -- (2.5,7);

\node[circle,draw,inner sep=1.5pt,minimum size=18pt, color=red] (I) at (9,8) {0};
\node[vertex] (J) at (9,7) {3};
\node[vertex] (K) at (8,6) {0};
\node[vertex] (L) at (10,6) {0};

\draw[arrow] (J) -- (I);
\draw[arrow] (K) -- (J);
\draw[arrow] (L) -- (K);
\draw[arrow] (L) -- (J);

\draw[arrow] (8.83,7.7) -- (8.83,7.3);
\draw[arrow] (9.17,7.7) -- (9.17,7.3);
\draw[arrow] (9.1,6.7) --  (9.7, 6.1);
\draw[arrow] (9.3, 6.9) -- (9.9, 6.3);

\draw[->, line width=1mm, >=Stealth] (7.5,7) -- (6.5,7) ;

\node[vertex] (1) at (9,4) {0};
\node[circle,draw,inner sep=1.5pt,minimum size=18pt, color=red] (2) at (9,3) {0};
\node[vertex] (3) at (8,2) {0};
\node[vertex] (4) at (10,2) {2};

\draw[arrow] (2) -- (1);
\draw[arrow] (3) -- (2);
\draw[arrow] (4) -- (3);
\draw[arrow] (4) -- (2);

\draw[arrow] (8.83,3.7) -- (8.83,3.3);
\draw[arrow] (9.17,3.7) -- (9.17,3.3);
\draw[arrow] (9.1, 2.7) -- (9.7, 2.1);
\draw[arrow] (9.3, 2.9) -- (9.9, 2.3);

\draw[->, line width=1mm, >=Stealth] (9,4.5) -- (9,5.5) ;

\node[vertex] (5) at (5,4) {0};
\node[vertex] (6) at (5,3) {0};
\node[vertex] (7) at (4,2) {1};
\node[circle,draw,inner sep=1.5pt,minimum size=18pt, color=red] (8) at (6,2) {0};

\draw[arrow] (6) -- (5);
\draw[arrow] (7) -- (6);
\draw[arrow] (8) -- (7);
\draw[arrow] (8) -- (6);

\draw[arrow] (4.83,3.7) -- (4.83,3.3);
\draw[arrow] (5.17,3.7) -- (5.17,3.3);
\draw[arrow] (5.1, 2.7) -- (5.7, 2.1);
\draw[arrow] (5.3, 2.9) -- (5.9, 2.3);

\draw[->, line width=1mm, >=Stealth] (6.5,3) -- (7.5,3)  ;

\node[vertex] (9) at (1,4) {0};
\node[vertex] (10) at (1,3) {0};
\node[circle,draw,inner sep=1.5pt,minimum size=18pt, color=red] (11) at (0,2) {0};
\node[vertex] (12) at (2,2) {0};

\draw[arrow] (10) -- (9);
\draw[arrow] (11) -- (10);
\draw[arrow] (12) -- (11);
\draw[arrow] (12) -- (10);

\draw[arrow] (0.83,3.7) -- (0.83,3.3);
\draw[arrow] (1.17,3.7) -- (1.17,3.3);
\draw[arrow] (1.1, 2.7) -- (1.7, 2.1);
\draw[arrow] (1.3, 2.9) -- (1.9, 2.3);

\draw[->, line width=1mm, >=Stealth] (2.5,3) -- (3.5,3);

\end{tikzpicture}
\caption{The six first steps of a dynamical bound}
\label{fig: dynamical bound}
\end{figure}

\medskip
In order to improve this formula, we introduce an algebraic tool which has already been highly used in Chip-firing studies: the \textit{primitive period vector} $v_G$, which is the smallest positive vector cancelling the Laplacian matrix of the graph (see e.g. \cite{Bjor92,Toth18,Toth22}). Each component of $v_G$ gives the minimum number of time each vertex must be activated in order to go back to the initial configuration. Hence, the \textit{period length} $P = ||v_G||_1$ gives the total number of activations in a cycle of an infinite game.  Thanks to this, we are able to restrict the set of strategies to some particular $P$-periodic strategies. Moreover, we show that the minimal gain is obtained after $P-1$ steps (see Theorem~\ref{Second formula}). Although unsurprising, given the interpretation of $P$, this result requires a particularly long proof. 

\medskip
Finally, we show that $c$ can also be computed thanks to a more intrinsic property of the graph, which we shall call \textit{primitive feedback number}, as it relies on $v_G$ but is equal to the traditional feedback number if $G$ is Eulerian. This characterisation uses all the previous results about the dynamical bounds and involves the construction of an auxiliary graph, called the \textit{primitive extension} of $G$, which is built by duplicating all the vertices according to the components of $v_G$. (See Definition~\ref{prim} and Theorem~\ref{Genfeednum}.) In particular, this result shows that the equality $c=f$ in the Eulerian case is actually a manifestation of a non-Eulerian phenomenon, governed by the primitive period vector.

\section{Background and notations}

Throughout this article, we use the convention $\N = \{0,1,2,3,\dots\}$ and $\N_+ = \N \backslash \{0\}.$

\medskip
\noindent
If $V$ is a finite set, we note $|V|$ its cardinal.

\medskip
\noindent
For any $j,k \in \N$, the set of all natural numbers greater than $j$ and less than $k$ is denoted by $[[j,k]].$

\medskip
\noindent
A directed multigraph is an ordered pair $G=(V,E)$ where $V$ is a finite set of vertices and $E : V \times V \to \N$ a function counting the number of directed edges between ordered pairs of vertices. For all $v \in V$, we set as usual $$d^+(v) = \sum_{v' \in V} E(v,v') \quad \text{and} \quad d^-(v) = \sum_{v' \in V} E(v',v),$$ the \textit{outdegree} and \textit{indegree} of $v$.  

\medskip
\noindent
A directed multigraph is said to be \textit{Eulerian} if $d^+(v)=d^-(v)$ for all $v \in V.$

\medskip
\noindent
A directed multigraph is said to be \textit{loop-free} if $E(v,v)=0$ for all $v \in V.$

\medskip
\noindent
A directed multigraph is said to be \textit{strongly connected} if, for any ordered pair $(v,w) \in V \times V$ where $v \neq w$, there exist a natural number $p \in \N_+$ and a finite sequence $v_0,\dots,v_p$ of vertices such that $v_0=v, v_p=w$ and $E(v_i,v_{i+1}) \geq 1$ for all $i \in [[0,p-1]].$ 

\medskip
We shall now fix some notations with respect to the Chip-firing game. Let $G =(V,E)$ be a directed multigraph. A configuration (of chips) on $G$ is any function from $V$ to $\N$. Given a configuration, a vertex $v$ of $G$ can \textit{fire} if it has more chips than $d^+(v).$ Given an initial configuration of chips $C_0$, a \textit{legal game} is a function $C : \N \to \N^V$ such that 

\begin{enumerate}
\item[$\bullet$] $C(0)= C_0.$
\item[$\bullet$] $C(n+1)= C(n)$ if no vertices can fire a chip given the configuration $C(n)$.
\item[$\bullet$] If a vertex can fire given the configuration $C(n)$ then there is such a vertex $v^*$ such that $C(n+1)(v^*) = C(n)(v^*)-d^+(v^*)$ and $C(n+1)(v) = C(n)(v) + E(v^*,v)$ for all $v \neq v^*.$ One says that $v^*$ is fired or activated.
\end{enumerate}

\noindent
A legal game is said to be \textit{finite} or \textit{terminating} if the sequence $(C(n))_{n \in \N}$ stabilizes at some point. Otherwise it is said to be \textit{infinite} or \textit{non-terminating}. Obviously, at any point of a legal game, the total amount of chips remains unchanged, that is to say $$\sum_{v\in V} C(n)(v) = \sum_{v \in V} C_0(v), \quad \forall n \in \N.$$

The following theorem is of primary importance.
\begin{theorem}[\cite{Bjor92}, Theorem 1.1.]
Given a directed multigraph G and an initial configuration of chips $C_0$, either every legal game can be continued indefinitely, or every legal game terminates after the same number of moves with the same final position. The number of times a given vertex is activated is the same in every finite legal game.
\end{theorem}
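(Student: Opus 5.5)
This is the classical strong-convergence ("abelian") property of chip-firing, and I will prove it with the exchange lemma plus a counting argument on firing sequences. Encode a legal game by its sequence of fired vertices $(v_1,v_2,\dots)$. Note that a vertex $v$ can fire in a configuration $C$ iff $C(v)\ge d^+(v)$. The rule in the definition requires ``more chips than $d^+(v)$'', and the argument is identical under either convention. The basic observation is \emph{local confluence}. Suppose $u\neq w$ can both fire in $C$. Firing $u$ only decreases the chip count at $u$, and it does not decrease the count at $w$. So $w$ can still fire afterwards, and symmetrically. Moreover, firing $u$ then $w$ yields the same configuration as firing $w$ then $u$, because each firing adds a fixed vector (minus the corresponding column of the Laplacian).

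The key step is the following exchange lemma. Let $\alpha=(u_1,\dots,u_k)$ and $\beta=(w_1,\dots,w_m)$ be two legal firing sequences from $C_0$. For each vertex $x$, let $|\alpha|_x$ be the number of times $x$ occurs in $\alpha$, and define $|\beta|_x$ likewise. I claim: if $\beta$ is maximal, i.e.\ ends in a stable configuration, then $|\alpha|_x\le|\beta|_x$ for every $x$.

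I will prove this by looking at the first firing of $\alpha$ that exceeds $\beta$'s counts. Suppose the claim fails, and let $j$ be the first index such that, setting $x=u_j$, the vertex $x$ occurs strictly more often in $(u_1,\dots,u_j)$ than in $\beta$. Then for every vertex $y$, the prefix $(u_1,\dots,u_{j-1})$ fires $y$ at most $|\beta|_y$ times. The number of chips at $x$ after a firing sequence equals
\[
C_0(x)+\sum_{y\neq x}(\#\text{firings of }y)\,E(y,x)-(\#\text{firings of }x)\,d^+(x).
\]
The prefix fires $x$ exactly $|\beta|_x$ times, and fires every other $y$ at most $|\beta|_y$ times. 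Hence the chip count at $x$ after the prefix is at most the chip count at $x$ in the final configuration of $\beta$. By stability that count is below $d^+(x)$, so $x$ cannot fire at step $j$, which is a contradiction.

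This monotonicity comparison is the heart of the proof, and the main point needing care is the use of $E(y,x)\ge 0$. Note that loops do not matter, since $E(x,x)$ cancels in the net change at $x$. If loops are allowed, the net loss is $d^+(x)-E(x,x)$, and the argument is unchanged.

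Conclusions. Suppose some legal game terminates with firing sequence $\beta$. Any other legal game is a legal sequence $\alpha$ whose every finite prefix fires at most $|\beta|_x$ times at each $x$. So $\alpha$ has length at most $|\beta|$, and hence every legal game terminates. Applying the lemma with the roles of the two terminating games swapped gives $|\alpha|_x=|\beta|_x$ for all $x$. This is the statement that each vertex is activated the same number of times. Summing over $x$ shows that both games have the same number of moves. The final configuration is $C_0$ plus a combination of firing vectors that depends only on these counts, so it coincides. The dichotomy follows: either some game terminates, in which case all do, with identical data; or none does, in which case every legal game continues indefinitely.
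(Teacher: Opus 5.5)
The paper does not prove this statement. It is quoted from Bj\"orner and Lov\'asz \cite{Bjor92}, with the undirected version in \cite{Bjor91}, so there is no in-paper argument to compare yours with.

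Your proof is correct and self-contained. The key lemma is sound:
\begin{itemize}
\item Minimality of $j$ forces the prefix $(u_1,\dots,u_{j-1})$ to fire $x$ exactly $|\beta|_x$ times, and every other vertex $y$ at most $|\beta|_y$ times.
\item Since $E(y,x)\ge 0$, the chip count at $x$ is nondecreasing in the firing counts of the other vertices. So it is bounded by the count at $x$ in the stable end position of $\beta$, which rules out firing $x$.
\item You apply the lemma to every finite prefix of a possibly infinite game. Combined with the fact that a legal game in the paper's sense must fire whenever some vertex is fireable, this gives termination.
\item Applying the lemma symmetrically gives equality of the firing vectors, hence equal lengths and equal final positions.
\end{itemize}
Your remarks on the threshold convention and on loops are also right. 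The argument only uses that firing is a threshold condition at $x$ and that the count at $x$ is monotone in the other vertices' firing counts.

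The one thing to tidy is your opening paragraph on local confluence, which the counting argument never uses. It would be the main ingredient of the other classical route. There one shows that the first move $u_1$ of $\alpha$ must occur in the terminating game $\beta$, since it stays fireable until it is fired. One then moves its first occurrence to the front by commutation, and inducts on $|\beta|$. Your domination argument is shorter and directly gives the stronger statement $|\alpha|_x\le|\beta|_x$ for every legal $\alpha$. You can simply drop that paragraph.
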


As a consequence, given $C_0$, if a legal game is infinite there are no finite legal games. Therefore, we define the number $c$ as the smallest natural number such that there is an initial configuration $C_0$ and an infinite legal game $C$ starting at $C_0$ with $c = \sum_{v \in V} C_0(v).$ 

\medskip

Finally, we introduce some algebraic tools that are particularly useful when studying Chip-firing games. If $G=(V,E)$ is a directed multigraph and if $v_1,\dots,v_N$ is an ordering of $V$, we define the Laplacian matrix $\mathcal{L}$ by \[
    \mathcal{L}_{i,j} = \begin{cases}
        d^+(v_i)-E(v_i,v_i) & \textup{if } i = j,\\
        -E(v_j,v_i) & \textup{if } i \ne j.
    \end{cases}
    \]  
Since the loops do not contribute to the Laplacian matrix, we will only work with loop-free graphs in order to lighten certain proofs. In the case where $G$ is strongly connected, Proposition $4.1.$ in \cite{Bjor92} entails that the null-space $$\mathcal{K} = \{ u \in \mathbb{N}_+^N  : \mathcal{L}u = 0 \}$$ contains a unique vector $v_G$, called \textit{primitive period vector}, such that $v_G(i) \leq u(i)$ for all $i \in [[1,N]]$ and all $u \in \mathcal{K}.$ This is actually a consequence of the Perron-Frobenius theorem applied to $DI-\mathcal{L}$ where $D$ is the maximum outdegree of $G$ (see e.g. \cite{Minc88}). Moreover, all the components of $v_G$ are equal to $1$ if $G$ is Eulerian. We set $$P = ||v_G||_1 = \sum_{i=1}^N v_G(i)$$ and call this number \textit{the period length} of the graph.

\section{Strategies and dynamical bounds: a first theoretical formula}

In this section, we work with a directed loop-free multigraph $G = (V,E)$.

\begin{definition}
A \textit{strategy} is any function $g : \N \to V.$
\end{definition}

\begin{definition}
Let $g$ be a strategy. \textit{The dynamical bound associated to} $g$ is the function $B_g : \mathbb{N} \rightarrow {\mathbb{N}}^V$ defined recursively as follows:
    \begin{itemize}
        \item[$\bullet$] $B_g(0)(v) = 0$ for all $v \in V$.
        \item[$\bullet$] $B_g(n+1)(v) = B_g(n)(v) + d^+(v)$ if $v = g(n)$.
        \item[$\bullet$] $B_g(n+1)(v) = \max(B_g(n)(v) - E(g(n),v), 0)$ if $v \ne g(n)$.
    \end{itemize}
\end{definition}

\begin{definition}
Let $g$ be a strategy and $B_g$ its associated dynamical bound. We define a function $\delta_g : V \times \mathbb{N} \rightarrow \mathbb{N}$ by $$\delta_g(v,n) = \max(0,E(g(n),v) - B_g(n)(v)).$$
\end{definition}

\begin{proposition}\label{delta}
Let $g$ be a strategy and $B_g$ its associated dynamical bound. For all $n \in \mathbb{N}$,
    \[
    \sum_{v \in V} \left(B_g(n+1)(v) - B_g(n)(v) \right) = \sum_{v \in V} \delta_g(v,n).
    \]
\end{proposition}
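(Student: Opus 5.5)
We will prove the identity by computing $B_g(n+1)(v)-B_g(n)(v)$ vertex by vertex and then summing over $V$. Fix $n$ and write $w=g(n)$. The sum splits into the contribution of $w$ and the contributions of the vertices $v\neq w$; the definitions of $B_g$ and $\delta_g$ are written exactly so that these two pieces can be combined.

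\emph{The vertex $w=g(n)$.} By definition, $B_g(n+1)(w)-B_g(n)(w)=d^+(w)$. Since $G$ is loop-free, $E(w,w)=0$, and hence $\delta_g(w,n)=\max(0,-B_g(n)(w))=0$ because $B_g(n)(w)\ge 0$. Loop-freeness also gives
\[
d^+(w)=\sum_{v\neq w}E(w,v).
\]

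\emph{A vertex $v\neq w$.} Here $B_g(n+1)(v)-B_g(n)(v)=\max(B_g(n)(v)-E(w,v),0)-B_g(n)(v)$. We use the elementary identity $\max(a-b,0)-a=-b+\max(0,b-a)$, valid for all integers $a,b$; it is checked by treating the cases $a\ge b$ and $a<b$ separately. Taking $a=B_g(n)(v)$ and $b=E(w,v)$, we obtain
\[
B_g(n+1)(v)-B_g(n)(v)=-E(w,v)+\delta_g(v,n).
\]

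\emph{Summing.} The total change is
\[
d^+(w)+\sum_{v\neq w}\bigl(-E(w,v)+\delta_g(v,n)\bigr)=\sum_{v\neq w}\delta_g(v,n).
\]
Since $\delta_g(w,n)=0$, the right-hand side equals $\sum_{v\in V}\delta_g(v,n)$, which is the claimed identity.

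There is no real obstacle in this argument. The only points needing care are that loop-freeness is used twice, once to show $\delta_g(g(n),n)=0$ and once to rewrite $d^+(w)$ as a sum over $v\neq w$, and that the max identity is correct in both sign cases.
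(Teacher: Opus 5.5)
Your proposal is correct and follows essentially the same route as the paper's proof. Like the paper, you split off $g(n)$, rewrite $\max(B_g(n)(v)-E(g(n),v),0)-B_g(n)(v)$ as $\delta_g(v,n)-E(g(n),v)$, and use loop-freeness both to cancel $d^+(g(n))$ against $\sum_{v\neq g(n)}E(g(n),v)$ and to get $\delta_g(g(n),n)=0$.
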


\begin{proof}
Let $n \in \N$ and let us set $v^* = g(n)$. Then, by definition of $B_g$,
    \begin{align*}
        \sum_{v \in V} (B_g(n+1)(v) - B_g(n)(v)) &= \sum_{v \in V\setminus \{v^*\}} \left( \max( B_g(n)(v) - E(g(n),v),0) - B_g(n)(v) \right) + d^+(v^*)\\
        &= \sum_{v \in V\setminus \{v^*\}} \left(\max(0,E(g(n),v) - B_g(n)(v))  - E(g(n),v) \right) + d^+(v^*)\\
        &=  \sum_{v \in V\setminus \{v^*\}} (\delta_g(v,n) - E(g(n),v)) + d^+(v^*)\\
        &= \sum_{v \in V\setminus \{v^*\}} \delta_g(v,n) -\sum_{v \in V\setminus \{v^*\}}  E(v^*,v) + d^+(v^*)\\
        &=  \sum_{v \in V\setminus \{v^*\}} \delta_g(v,n)\\
        &= \sum_{v \in V} \delta_g(v,n),
    \end{align*}
the last equality coming from $$\delta_g(v^*,n) = \max(0,E(v^*,v^*) - B_g(n)(v^*)) = \max(0, - B_g(n)(v^*)) = 0.$$
\end{proof}

\begin{corollary}\label{globalgrowth}
Let $g$ be a strategy and $B_g$ its associated dynamical bound. For all $n \in \mathbb{N}$, 
    \[
    \sum_{v \in V} B_g(n+1)(v) \ge \sum_{v \in V} B_g(n)(v).
    \]
    Moreover, if
    \[
    \sum_{v \in V} B_g(n+1)(v) = \sum_{v \in V} B_g(n)(v),
    \]
    then $B_g(n)(v) \ge E(g(n),v)$ for all $v \in V.$
\end{corollary}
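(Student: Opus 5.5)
The plan is to read both claims directly off Proposition~\ref{delta}, which rewrites the one-step change of the total number of chips as
\[
\sum_{v \in V} B_g(n+1)(v) - \sum_{v \in V} B_g(n)(v) = \sum_{v \in V} \delta_g(v,n).
\]

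For the first inequality, note that by definition $\delta_g(v,n) = \max(0, E(g(n),v) - B_g(n)(v))$. Each term is therefore a maximum with $0$, so it is nonnegative. The right-hand side is a sum of nonnegative terms, so it is nonnegative, and this gives $\sum_v B_g(n+1)(v) \geq \sum_v B_g(n)(v)$ for every $n \in \N$.

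For the equality case, suppose the two totals coincide. Then the sum of the $\delta_g(v,n)$ is zero. A sum of nonnegative terms vanishes only if every term vanishes, so $\delta_g(v,n) = 0$ for every $v \in V$. Now $\delta_g(v,n)=0$ means $\max(0, E(g(n),v) - B_g(n)(v)) = 0$. This holds exactly when $E(g(n),v) - B_g(n)(v) \leq 0$, that is, when $B_g(n)(v) \geq E(g(n),v)$, which is the claim.

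It is worth checking the vertex $v = g(n)$ separately. Since $G$ is loop-free, $E(g(n),g(n)) = 0$, and $B_g(n)(g(n)) \geq 0$ because $B_g$ takes values in $\N$. The inequality therefore holds trivially there, consistent with the observation at the end of the proof of Proposition~\ref{delta}.

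There is no real obstacle here. The only point needing care is the nonnegativity of each $\delta_g(v,n)$, which is immediate from its definition as a maximum with $0$; everything else is the equivalence between a vanishing sum of nonnegative terms and the vanishing of each term.
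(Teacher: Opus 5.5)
Your proposal is correct and matches the paper's approach: the paper states this corollary without a separate proof, as an immediate consequence of Proposition~\ref{delta}, via the nonnegativity of each $\delta_g(v,n)$ and the fact that a vanishing sum of nonnegative terms forces every term to vanish. Your separate check at $v = g(n)$ is harmless but redundant, since the general argument already covers that vertex.
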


Since the sequence $\left(\sum_{v \in V} B_g(n)(v)\right)_{n \in \N}$ is increasing, it has a limit in $\N \cup \{+\infty\}.$

\begin{definition}
Let $g$ be a strategy. \textit{The gain} associated to $g$ is the element $h_g \in \N \cup \{+\infty\}$ defined by $$h_g = \lim_{n\to +\infty} \sum_{v \in V} B_g(n)(v).$$
\end{definition}

\begin{definition}
We set $h^* = \min \{h_g : g \, \text{is a strategy}\}.$ A strategy $g$ is said to be \textit{optimal} if $h_g = h^*.$
\end{definition}

The main objective of this section is to show that $c$ is actually equal to $h^*$. The proof is in two parts.

\begin{proposition}
$h^* \leq c.$
\end{proposition}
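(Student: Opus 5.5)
Fix an initial configuration $C_0$ with $\sum_{v} C_0(v) = c$ and an infinite legal game $C$ starting from it. Let $w(n)$ be the vertex fired between $C(n)$ and $C(n+1)$. Since the game is infinite, a vertex is fired at every step, so $w:\N\to V$ is well defined. The idea is to read the game backwards and compare it with a dynamical bound. For each $m\in\N$, define the finite strategy prefix by $g_m(k) = w(m-1-k)$ for $k\in[[0,m-1]]$, extended arbitrarily afterwards. The dynamical bound $B_{g_m}$ starts from zero and goes back in time through the firings $w(m-1), w(m-2),\dots, w(0)$.

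\textbf{Key claim.} For every $m$ and every $k\le m$,
$$C(m-k)(v) \ge B_{g_m}(k)(v) \quad \text{for all } v\in V.$$
We prove it by induction on $k$. For $k=0$ it holds because $C(m)\ge 0 = B_{g_m}(0)$. For the inductive step, set $v^* = w(m-k-1) = g_m(k)$.
\begin{itemize}
\item Since $v^*$ fires legally at time $m-k-1$, we have $C(m-k-1)(v^*) = C(m-k)(v^*) + d^+(v^*) \ge B_{g_m}(k)(v^*) + d^+(v^*) = B_{g_m}(k+1)(v^*)$.
\item For $v\ne v^*$, we have $C(m-k-1)(v) = C(m-k)(v) - E(v^*,v) \ge B_{g_m}(k)(v) - E(v^*,v)$. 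Also $C(m-k-1)(v)\ge 0$ as configurations are $\N$-valued. Together these give $C(m-k-1)(v) \ge \max(B_{g_m}(k)(v)-E(v^*,v),0) = B_{g_m}(k+1)(v)$.
\end{itemize}
Note that the loop-free hypothesis makes the firing rule match the bound's recursion exactly. Taking $k=m$ yields $\sum_v B_{g_m}(m)(v) \le \sum_v C_0(v) = c$ for every $m$.

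\textbf{Passing to the limit.} The remaining difficulty is that each $g_m$ depends on $m$, so we need a single strategy $g$ with $h_g\le c$.
\begin{itemize}
\item The sums $\sum_v B_{g_m}(k)(v)$ are nondecreasing in $k$ by Corollary~\ref{globalgrowth}, so they are at most $c$ for all $k\le m$.
\item Every value $B_{g_m}(k)$ therefore lies in the finite set $\{x\in\N^V:\sum x\le c\}$.
\item We use a K\H{o}nig-lemma / diagonal compactness argument. $V$ is finite, so infinitely many $m$ share the same $g_m(0)$; among those, infinitely many share the same $g_m(1)$; and so on. This produces a strategy $g$ such that, for each $K$, the prefix $g(0),\dots,g(K-1)$ coincides with that of some $g_m$ with $m\ge K$.
\item $B_g(K)$ depends only on $g(0),\dots,g(K-1)$, so $B_g(K)=B_{g_m}(K)$ and $\sum_v B_g(K)(v)\le c$ for every $K$.
\end{itemize}
Hence $h_g\le c$, and therefore $h^*\le h_g\le c$.

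The main obstacle is this extraction step, since a backward-read infinite game does not directly give a strategy. The compactness argument handles it because the strategies take values in the finite set $V$, and no bound on the configurations is even needed for it. Alternatively, if the game is eventually periodic, one could read a period backwards repeatedly. The compactness route avoids having to prove such periodicity.
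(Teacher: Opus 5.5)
Your proof is correct. Its core is the same as the paper's: your Key claim is the paper's inequality $(*)$, proved by the same induction, with the nonnegativity of $C$ absorbing the $\max(\cdot,0)$ in the recursion of the bound. The real difference is in how a single strategy is obtained.

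The paper notes that the game stays in the finite set of configurations of total $c$. Hence $C(n)=C(n+r)$ for some $n$ and some $r\geq 1$. It then takes the $r$-periodic strategy $g_2(k)=g_1(n+r-1-\overline{k})$, which replays the firings of the window $[[n,n+r-1]]$ backwards forever. The repetition $C(n)=C(n+r)$ is exactly what lets its induction wrap around when $\overline{k}=r-1$, so no limiting argument is needed.

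You instead compare every finite reversed prefix $g_m$ with the game and extract a limit strategy by a diagonal (K\H{o}nig-type) argument over the finite set $V$. This relies on $B_g(K)$ depending only on $g(0),\dots,g(K-1)$, and on choosing $m\geq K$ so that the arbitrary extension of $g_m$ never enters.

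The paper's route is explicit and yields a periodic strategy, in the spirit of the good strategies of the next section. Yours is non-constructive, but it avoids the modular-index bookkeeping and never has to locate a repeated configuration. That said, the periodicity you present as something to avoid costs only one pigeonhole line here, so neither route is really harder than the other.
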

\begin{proof}
By definition of $c$, there is an initial configuration $C_0$ and a function $C : \N \to \N^V$ such that
    \[
    \sum_{v \in V} C_0(v) = c
    \] 
and $C$ is an infinite legal game. Since $C$ can only take a finite number of different values, there exist $n \in \mathbb{N}$ and $r \in \N_+$ such that $C(n) = C(n+r)$. Let $g_1 : \mathbb{N} \rightarrow V$ be the sequence of vertices activated by $C$ and let $g_2 : \N \to V$ be the strategy defined by
    \[
    g_2(k) = g_1(n+r -1 - \overline{k}),
    \]
where $\overline{k}$ is the remainder of the euclidean division of $k$ by $r$. We shall now show that $h_{g_2} \leq c$, which is enough, since $h^* \leq h_{g_2}.$ In order to do that, we first need to prove that for all $k \in \mathbb{N}$ and $v \in V$, one has
    \[
    B_{g_2}(k)(v) \le C(n+r-\overline{k})(v). \quad (*)
    \]
    This is done by induction on $k$. The case $k = 0$ is clear since $B_{g_2}(0)(v) = 0$. Let us now assume that the result holds for $k \in \mathbb{N}$ and set
    \[
    v^* = g_2(k) = g_1(n+r -1 - \overline{k}).
    \]
    By definition of a legal game, we then have
    \begin{align*}
        B_{g_2}(k+1)(v^*) &= B_{g_2}(k)(v^*) + d^+(v^*) \le C(n+r-\overline{k})(v^*) + d^+(v^*)\\
        &= C(n+r-\overline{k} - 1)(v^*) = C(n+r - \overline{k+1})(v^*),
    \end{align*}
where we used the equality $C(n) = C(n+r)$ in the case where $\overline{k} = r-1$. This proves the induction step for $v^*$. If $v \ne v^*$, then, in a similar way
    \begin{align*}
        B_{g_2}(k+1)(v) &= \max(B_{g_2}(k)(v) - E(v^*,v), 0) \le \max(C(n+r-\overline{k})(v) - E(v^*,v),0)\\
        &= \max(C(n+r-\overline{k} - 1)(v),0) = C(n+r-\overline{k} - 1)(v)\\
         &= C(n+r - \overline{k+1}).
    \end{align*}
Now, by summing $(*)$ over all $v \in V$, one gets 
    \[
    \sum_{v \in V} B_{g_2}(k)(v) \le \sum_{v \in V} C(n+r-\overline{k})(v) = \sum_{v \in V} C_0(v)= c.
    \]
Therefore
    \[
    h_{g_2} = \lim_{k \rightarrow \infty}  \sum_{v \in V} B_{g_2}(k)(v) \le c
    \]
and the conclusion follows.
\end{proof}

\begin{proposition}
$c \leq h^*$
\end{proposition}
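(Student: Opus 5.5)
The plan is to reverse the construction used in the proof of $h^* \leq c$: starting from an optimal strategy, I will read its dynamical bound backwards in time and show that this is an infinite legal game whose number of chips equals $h^*$.

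First, $h^*$ is finite. The previous proposition gives $h^* \leq c$, and $c$ is finite: by the pigeonhole remark in the introduction, any configuration with more than $M-N$ chips allows a firing. Since $h^*$ is a minimum over a subset of $\N \cup \{+\infty\}$ that contains finite values, it is attained. So fix an optimal strategy $g$ with $h_g = h^* < +\infty$. The sequence $\left(\sum_{v} B_g(n)(v)\right)_n$ is a nondecreasing sequence of integers (Corollary~\ref{globalgrowth}) converging to the finite value $h^*$. Hence there is $n_0$ such that
\[
\sum_v B_g(n)(v) = h^* \quad \text{for all } n \geq n_0 .
\]
The second part of Corollary~\ref{globalgrowth} then gives $B_g(k)(v) \geq E(g(k),v)$ for all $k \geq n_0$ and all $v \in V$. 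Moreover, for $n \geq n_0$ every configuration $B_g(n)$ lies in the finite set of configurations with total $h^*$. Therefore there exist $n_0 \leq n < m$ with $B_g(n) = B_g(m)$.

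The key step is to check that firing $g(k)$ from the configuration $B_g(k+1)$ is legal and produces exactly $B_g(k)$, for every $k \geq n_0$.
\begin{itemize}
\item The vertex $v^* = g(k)$ holds $B_g(k)(v^*) + d^+(v^*) \geq d^+(v^*)$ chips, so it can fire. After firing it holds $B_g(k)(v^*)$ chips.
\item Any other vertex $v$ holds $\max(B_g(k)(v) - E(v^*,v), 0)$ chips before the firing. Since $B_g(k)(v) \geq E(v^*,v)$, this equals $B_g(k)(v) - E(v^*,v)$, so after receiving $E(v^*,v)$ chips it holds exactly $B_g(k)(v)$.
\end{itemize}

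Now take the initial configuration $C_0 = B_g(m)$ and fire successively $g(m-1), g(m-2), \dots, g(n)$. By the key step this passes through $B_g(m-1), \dots, B_g(n)$, and $B_g(n) = B_g(m) = C_0$. Repeating this block periodically gives a function $C : \N \to \N^V$ in which a legal firing is performed at every step. It is therefore an infinite legal game, with $\sum_v C_0(v) = \sum_v B_g(m)(v) = h^*$. By definition of $c$ as a minimum, this yields $c \leq h^*$.

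The main delicate point is the key step for the non-fired vertices: inverting the truncation $\max(\cdot, 0)$ is only possible because $n$ was chosen beyond the stabilization index $n_0$, which is where the second part of Corollary~\ref{globalgrowth} is needed. The rest consists of the finiteness argument for $h^*$ and the pigeonhole choice of $n < m$.
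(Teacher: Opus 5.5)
Your proof is correct and follows essentially the same route as the paper: fix an optimal strategy, use stabilization of $\sum_v B_g(n)(v)$ and the pigeonhole principle to find $n_0 \le n < m$ with $B_g(n) = B_g(m)$, and read the dynamical bound backwards, using the equality case of Corollary~\ref{globalgrowth} to undo the truncation at the non-fired vertices. Your explicit pigeonhole argument that $c$ (hence $h^*$) is finite is a small improvement, since the paper takes the finiteness of $c$ for granted.
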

\begin{proof}
By definition of $h^*$, there is an optimal strategy $g$ such that $h_g = h^*.$ Moreover, by the previous proposition, we know that $h^*$ is finite. Hence, since $h^*$ is the limit of an increasing sequence of natural numbers, this sequence must stabilize, i.e. there exists $n^* \in \mathbb{N}$ such that
    \[
    \sum_{v \in V} B_{g}(n)(v) = h^*,
    \]
    for all $n \geq n^*.$ This implies that there exist $n \ge n^*$ and $r \in \N_+$ such that $B_{g}(n) = B_{g}(n+r)$. We shall now define a sequence of configurations $C :  \mathbb{N} \rightarrow \mathbb{N}^V$ by 
    \[
    C(k) = B_{g}(n+r  - \overline{k}).
    \]
and show that $C$ is an infinite legal game with initial configuration equal to $B_{g}(n)$. Firstly, it is clear that $C(0) =  B_{g}(n+r) = B_{g}(n)$. Secondly, for all $k \in \N$, we need to find a vertex $v^*$ that can be activated in order to switch from $C(k)$ to $C(k+1).$ We claim that $v^* = g(n+r-\overline{k}-1)$ is such a vertex. Indeed
    \begin{align*}
        C(k)(v^*) &= B_{g}(n+r  - \overline{k})(g(n+r-\overline{k}-1))\\
        &= B_{g}(n+r  - \overline{k} - 1)(g(n+r-\overline{k}-1)) + d^+(g(n+r-\overline{k}-1))\\
        &\ge d^+(v^*),
    \end{align*} which means that it can be activated. Moreover,
    \begin{align*}
        C(k+1)(v^*) &= B_{g}(n+r  - \overline{k} - 1)(g(n+r-\overline{k}-1))\\
        &= B_{g}(n+r  - \overline{k} )(g(n+r-\overline{k}-1)) - d^+(g(n+r-\overline{k}-1))\\
        &= C(k)(v^*) - d^+(v^*),
    \end{align*}
    where we used the equality $B_{g}(n) = B_{g}(n+r)$ in the case where $\overline{k} = r-1$. Finally, for all $v \ne v^*$, since 
    \[
    \sum_{v \in V} B_{g}(n+r  - \overline{k} - 1)(v) = h^* = \sum_{v \in V} B_{g}(n+r  - \overline{k} )(v),
    \]
    we can use Corollary~\ref{globalgrowth} to deduce that $B_{g}(n+r  - \overline{k} - 1)(v) \ge E(v^*,v)$ and thus that 
    \[
    B_{g}(n+r  - \overline{k} - 1)(v) = B_{g}(n+r  - \overline{k} )(v) + E(v^*,v).
    \]
    Therefore, for all $v \ne v^*$,
    \begin{align*}
        C(k+1)(v) &= B_{g}(n+r  - \overline{k} - 1)(v) = B_{g}(n+r  - \overline{k} )(v) + E(v^*,v) = C(k)(v) + E(v^*,v),
    \end{align*}
which proves that $C$ is an infinite legal game with initial configuration equal to $B_{g}(n)$. Hence
    \[
    c \leq \sum_{v \in V} B_{g}(n)(v) = h^*
    \]
and the conclusion follows.
\end{proof}

By combining the last two propositions, we obtain

\begin{theorem}\label{First formula}
Let $G = (V,E)$ be a directed loop-free multigraph. The number $c$ is equal to $h^*$, i.e. to $$\min_{g \in \mathcal{G}} \left(\lim_{n\to +\infty} \sum_{v \in V} B_g(n)(v) \right),$$ where $\mathcal{G}$ denotes the set of all strategies.
\end{theorem}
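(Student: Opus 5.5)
Theorem~\ref{First formula} follows by combining the two preceding propositions, $h^* \leq c$ and $c \leq h^*$. The plan is therefore just to check that these two inequalities are correctly chained and that the quantities involved are well defined.

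First, $h^*$ must be a genuine minimum over $\mathcal{G}$. The gains $h_g$ take values in $\N \cup \{+\infty\}$, which is well ordered, so the infimum over any nonempty family is attained. Hence some optimal strategy exists and $h^* = \min_{g\in\mathcal{G}} h_g$. For each $g$, the limit $h_g = \lim_{n\to+\infty}\sum_{v\in V} B_g(n)(v)$ exists by Corollary~\ref{globalgrowth}, since the sequence is nondecreasing.

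Second, $c$ must be finite, so that the first proposition makes sense. By the pigeonhole argument recalled in the introduction, any configuration with more than $M-N$ chips yields an infinite game. Thus $c$ is a well-defined natural number, and the first proposition gives $h^* \le c < +\infty$. This finiteness is exactly what the second proposition uses to make the sum $\sum_v B_g(n)(v)$ stabilize along an optimal strategy. From that stabilization it reverses the periodic part of $B_g$ into an infinite legal game with $h^*$ chips, which gives $c \le h^*$.

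Both inequalities together give $c = h^*$. There is no real obstacle here: the substantive work, namely the reversal argument and the use of Corollary~\ref{globalgrowth} to guarantee that every vertex receives its full $E(v^*,v)$ chips, has already been carried out in the two propositions. The only point to watch is the order of deduction: the finiteness of $h^*$ must come from $h^* \le c$ before $c \le h^*$ is invoked.
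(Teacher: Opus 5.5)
Your proposal is correct and matches the paper's proof: the theorem comes from chaining the propositions $h^* \leq c$ and $c \leq h^*$, and the finiteness of $h^*$ that the second one needs comes from the first. Your extra checks are sound and are points the paper leaves implicit: the minimum defining $h^*$ is attained because $\N \cup \{+\infty\}$ is well ordered, and $c$ is a well-defined natural number by the pigeonhole bound.
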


Obviously, this theorem cannot really be used to compute $c$ in any practical way. That's why the next section is devoted to a refinement of this formula which allows to drop the limit sign and to strongly restrict the set $\mathcal{G}.$

\section{Reformulation using the primitive period vector}

From now and until the end of this article, we work with a strongly connected directed loop-free multigraph $G = (V,E)$. We set $N = |V|$ and choose an ordering $v_1,\dots,v_N$ of $V$. We denote as usual by $v_G$ the primitive period vector of $G$ and by $P$ its period.

\begin{definition}
The elements of the set
    \[
    \mathcal{F} = \{\sigma \in V^{[[0,P-1]]} : |\sigma^{-1}(\{v_i\})| = v_G(i) \quad \forall i \in[[1,N]] \}.
    \]
are called \textit{primitive functions}. A strategy $g : \N \to V$ is said to be \textit{good} if there is some $\sigma \in \mathcal{F}$ such that $g(k) = \sigma(\overline{k})$ for all $k \in \N,$ where $\overline{k}$ is the remainder of the euclidean division of $k$ by $P.$ If $\sigma$ is a primitive function, we denote by $g_{\sigma}$ the associated good strategy, by $B_{\sigma}$ the associated dynamical bound and by $h_{\sigma}$ the associated gain.
\end{definition}

Theorem~\ref{First formula} can be improved as follows:

\begin{theorem}\label{Second formula}
One has $$c = \min_{\sigma \in \mathcal{F}} \left(\sum_{v \in V} B_{\sigma}(P-1)(v) \right).$$
\end{theorem}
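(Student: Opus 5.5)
The plan is to prove the two inequalities separately. The direction $c \le \min_{\sigma}$ will come from building, for each $\sigma \in \mathcal{F}$, an explicit periodic legal game. The direction $c \ge \min_\sigma$ will come from extracting a primitive function out of an infinite game, as in the proof of $h^*\le c$.

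\emph{Step 1: $c \le \sum_{v} B_\sigma(P-1)(v)$ for every $\sigma\in\mathcal{F}$.} Fix $\sigma$ and put $X = B_\sigma(P-1)$. From $X$, I fire in order the vertices $\sigma(P-2),\sigma(P-3),\dots,\sigma(0)$, and then $\sigma(P-1)$. Let $Y_k$ be the configuration (possibly with negative entries a priori) obtained after the first $k$ of these $P-1$ firings. I will show by induction on $k$ that $Y_k \ge B_\sigma(P-1-k)$ pointwise.
\begin{itemize}
\item At the fired vertex $v^*=\sigma(P-2-k)$, the definition gives $B_\sigma(P-1-k)(v^*) = B_\sigma(P-2-k)(v^*)+d^+(v^*)$. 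So $Y_k(v^*) \ge d^+(v^*)$, which makes the firing legal, and after firing $Y_{k+1}(v^*)\ge B_\sigma(P-2-k)(v^*)$.
\item At every other vertex $v$, the dynamical bound decreased by $E(v^*,v)$ truncated at $0$, while $Y$ increases by $E(v^*,v)$. Hence $Y_{k+1}(v) \ge B_\sigma(P-1-k)(v)+E(v^*,v) \ge B_\sigma(P-2-k)(v)$.
\end{itemize}
This shows the first $P-1$ firings are legal.

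For the last firing, use that the total firing vector of the $P$ moves is $v_G$ and $\mathcal{L}v_G=0$. So after firing $\sigma(P-1)$ we come back exactly to $X$. Reading backwards, the configuration just before this last firing equals $X$ plus the reversed effect of firing $\sigma(P-1)$. Its value at $\sigma(P-1)$ is $X(\sigma(P-1))+d^+(\sigma(P-1)) \ge d^+(\sigma(P-1))$, so the last firing is legal as well. Repeating this $P$-cycle gives an infinite legal game starting from $X$. Hence $c \le \sum_v X(v)$, and minimizing over $\sigma$ gives $c \le \min_\sigma \sum_v B_\sigma(P-1)(v)$.

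\emph{Step 2: $c \ge \min_\sigma \sum_v B_\sigma(P-1)(v)$.} Take an infinite legal game $C$ with $c$ chips, and indices $n$, $r$ with $C(n)=C(n+r)$. The firing vector $u$ of the segment between $n$ and $n+r$ satisfies $\mathcal{L}u=0$. By strong connectivity and Perron--Frobenius (Proposition 4.1 of \cite{Bjor92}), $u = m\, v_G$ for some $m\in\N_+$.

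The key claim is that from $C(n)$ there is a legal sequence of exactly $P$ firings whose firing vector is $v_G$. To get it, I fire greedily from $C(n)$, only ever choosing vertices $v_i$ whose current count $w(i)$ is still $< v_G(i)$. Suppose the process gets stuck with $w \ne v_G$, so $w \le v_G$. Compare with the legal sequence $\alpha$ of the original game from $C(n)$, whose firing vector is $m v_G \ge v_G$. Let $\alpha_t$ be the first move of $\alpha$ that fires some $u_0$ for the $(w(u_0)+1)$-th time with $w(u_0)<v_G(u_0)$; such a move exists because $\alpha$ eventually fires every vertex at least $v_G(\cdot)$ times. Just before $\alpha_t$, every vertex has fired in $\alpha$ at most $w(\cdot)$ times, and $u_0$ exactly $w(u_0)$ times. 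So $u_0$ holds at most as many chips in $\alpha$ as in the greedy configuration, and therefore $u_0$ could fire in the greedy run, which is a contradiction. This least-action argument is the step I expect to be the main obstacle; it is the one place where the abelian structure is really used.

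Since $\mathcal{L}v_G=0$, the resulting $P$-step game returns to $C(n)$. Let $\tau(0),\dots,\tau(P-1)$ be its fired vertices, and set $\sigma(k)=\tau(P-1-k)$; then $\sigma\in\mathcal{F}$. Applying the induction $(*)$ from the proof of $h^*\le c$ to this $P$-periodic game gives $B_\sigma(k)(v)\le C(n+P-k)(v)$. Summing at $k=P-1$ yields $\sum_v B_\sigma(P-1)(v)\le c$, which closes the argument.
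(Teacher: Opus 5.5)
Your Step~1 is correct. It is in fact a more direct route to $c\le\min_\sigma\sum_v B_\sigma(P-1)(v)$ than the paper's, which goes through $c=h^*\le h_\sigma$ and Lemmas~\ref{suffgood} and~\ref{gaingood}. The end of Step~2 (reversing a $P$-step cycle and reusing $(*)$) is also fine.

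\textbf{The gap} is in the proof of your key claim. The assertion that ``just before $\alpha_t$, every vertex has fired in $\alpha$ at most $w(\cdot)$ times'' does not follow from the minimality of $t$, which only controls vertices with $w(x)<v_G(x)$. A saturated vertex ($w(x)=v_G(x)$) is frozen in your greedy run, but it may fire more often in $\alpha$ before time $t$ and feed extra chips to $u_0$. If you drop the restriction in the choice of $t$, the first over-firing vertex may itself be saturated, and then there is no contradiction.

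This cannot be patched inside the greedy argument. The argument only uses that $\alpha$ fires every vertex at least $v_G$ times, never that $\alpha$ returns to $C(n)$, and under that weaker hypothesis the conclusion is false. Take the path $a\leftrightarrow b\leftrightarrow c$ with one edge in each direction; it is Eulerian, so $v_G=(1,1,1)$.
\begin{itemize}
\item The configuration $(2,0,0)$ has the infinite game $a,a,b,c,a,b,c,\dots$, which fires every vertex at least once.
\item Your greedy run from $(2,0,0)$ fires $a$ once and stops at $w=(1,0,0)$.
\item No legal sequence with firing vector $v_G$ exists from $(2,0,0)$: it would have to return to $(2,0,0)$, which has no predecessor.
\end{itemize}
In this game $a$ fires twice before $b$ first fires, which is exactly where your chip comparison at $u_0$ breaks.

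\textbf{How to repair it.} The claim is true for $C(n)$, but the proof must use that the moves $\alpha_0,\dots,\alpha_{r-1}$ of the game between times $n$ and $n+r$ have firing vector exactly $m v_G$.
\begin{itemize}
\item Let $\beta$ be the subsequence of $\alpha$ that keeps, for each $v_i$, only its \emph{last} $v_G(i)$ firings.
\item Suppose $\alpha_s=v_i$ is kept, and let $f_s$ be the firing counts of $\alpha$ before $s$. Then the counts of $\beta$ before that move are $\max\bigl(0,f_s(\cdot)-(m-1)v_G(\cdot)\bigr)$, and $f_s(v_i)\ge(m-1)v_G(i)$.
\item Hence the chips at $v_i$ in $\beta$ minus the chips at $v_i$ in $\alpha$ equal
\[
(m-1)v_G(i)\,d^+(v_i)-\sum_{k\neq i}\min\bigl(f_s(v_k),(m-1)v_G(k)\bigr)E(v_k,v_i)\;\ge\;(m-1)(\mathcal{L}v_G)(i)=0 .
\]
\item So $v_i$ can fire in $\beta$ because it can in $\alpha$.
\end{itemize}
Thus $\beta$ is a legal $P$-step sequence with firing vector $v_G$, and the rest of your Step~2 applies unchanged.

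With this repair your proof is a genuinely shorter alternative to the paper's. The paper starts from an arbitrary optimal strategy $g$ and keeps the \emph{first} $v_G(i)$ occurrences of each $v_i$ in $g$, which is the backward mirror of this trick. It needs the long chain of lemmas after Lemma~\ref{gaingood} precisely because $g$ need not come from a periodic legal game.
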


The proof is particularly long and technical. For the reader's convenience, we cut it in a dozen of lemmas. 

\begin{lemma}\label{EvolDynBound}
Let $g$ be a strategy, $r \in \N_+$, $n \in \N$ and $v \in V$. Let us set
    \[
    K = \{j \in [[0,r-1]] : g(n+j) = v\} \quad L = [[0,r-1]] \setminus K
    \]
    and $k = |K|$. We then have
    \[
    B_g(n+r)(v) = B_g(n)(v) + k *d^+(v) - \sum_{j \in L} \min(B_g(n+j)(v), E(g(n+j), v)).
    \]
\end{lemma}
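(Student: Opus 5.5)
The plan is to telescope over the $r$ single steps from $n$ to $n+r$. We write
\[
B_g(n+r)(v) - B_g(n)(v) = \sum_{j=0}^{r-1} \bigl(B_g(n+j+1)(v) - B_g(n+j)(v)\bigr),
\]
and compute each increment separately according to whether $j \in K$ or $j \in L$.

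For $j \in K$, we have $g(n+j) = v$. The first recursive clause of the definition of the dynamical bound then gives the increment $d^+(v)$ exactly. Summing over the $k$ elements of $K$ yields the term $k\, d^+(v)$.

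For $j \in L$, we have $g(n+j) \neq v$. Writing $b = B_g(n+j)(v)$ and $e = E(g(n+j),v)$, the second clause gives the increment $\max(b-e,0) - b$. We use the elementary identity
\[
\max(b-e,0) - b = \max(-e,-b) = -\min(b,e).
\]
So each such step contributes $-\min\bigl(B_g(n+j)(v), E(g(n+j),v)\bigr)$. Summing over $L$ gives the last term of the formula.

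Adding the contributions from $K$ and $L$ to $B_g(n)(v)$ yields the claimed identity. There is no real obstacle. The only point requiring care is the case split, which needs $K$ and $L$ to partition $[[0,r-1]]$; this holds by construction. Loop-freeness is not even needed here, since the case $g(n+j)=v$ is handled entirely by the first clause.
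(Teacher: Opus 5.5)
Your proof is correct and is essentially the paper's argument. The paper proves the single-step case $r=1$ by the same two-case computation, using $\max(b-e,0)=b-\min(b,e)$. It then inducts on $r$, splitting $K$ and $L$ according to whether $g(n+r)=v$, which is exactly your telescoping sum written as an induction.
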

\begin{proof}
By induction on $r \in \N_+.$ If $r=1$, we have two cases.

\medskip
\noindent
\textbf{Case 1: $g(n) = v$}. In that case, $K = \{0\}$, $k = 1$ and $L = \emptyset$. Hence, by definition,
    \[
    B_g(n+1)(v) = B_g(n)(v) + d^+(v) = B_g(n)(v) + k *d^+(v) - \sum_{j \in L} \min(B_g(n+j)(v), E(g(n+j), v)).
    \]\textbf{Case 2: $g(n) \ne v$}. In that case, $K = \emptyset$, $k = 0$ et $L = \{0\}$. Hence, by definition,
    \begin{align*}
        B_g(n+1)(v) &= \max(B_g(n)(v) - E(g(n),v), 0)\\
        &= B_g(n)(v) - \min(B_g(n)(v), E(g(n), v))\\
        &= B_g(n)(v) + k *d^+(v) - \sum_{j \in L} \min(B_g(n+j)(v), E(g(n+j), v)).
    \end{align*}
Let us now prove the induction step. Assume the equality holds for $r \in \N_+$ and set \[
    K = \{j \in [[0,r]] : g(n+j) = v\} \quad L = [[0,r]] \setminus K
    \]
    with $k = |K|$. We also set
    \[
    K' = \{j \in [[0,r-1]] : g(n+j) = v\} \quad L' = [[0,r-1]] \setminus K'
    \] with $k' = |K'|$ and
    \[
    K^* = \begin{cases}
        \emptyset &\textup{if } g(n+r) \ne v\\
        \{0\} &\textup{if } g(n+r) = v
    \end{cases} \quad L^* = \begin{cases}
        \{0\} &\textup{if } g(n+r) \ne v\\
        \emptyset &\textup{if } g(n+r) = v
    \end{cases}
    \]
    with $k^* = |K^*|$. By construction, one has
    \[
    K = \begin{cases}
        K' \cup \{r\} &\textup{if } K^* = \{0\}\\
        K'  &\textup{if } K^* = \emptyset\\
    \end{cases} \quad L = \begin{cases}
        L' \cup \{r\} &\textup{if } L^* = \{0\}\\
        L'  &\textup{if } L^* = \emptyset\\
    \end{cases}
    \] and $k=k'+k^*.$ 
If we use the equality obtained in the initial step with $n$ replaced by $n+r$, we get
    \[
    B_g(n + r +1)(v) = B_g(n+r)(v) + k^* *d^+(v) - \sum_{j \in L^*} \min(B_g(n+r+j)(v), E(g(n+r+j), v)).
    \]
    Moreover, by the induction hypothesis we have
    \[
    B_g(n+r)(v) = B_g(n)(v) + k' *d^+(v) - \sum_{j \in L'} \min(B_g(n+j)(v), E(g(n+j), v)).
    \]
Using these two equalities, we finally obtain
    \begin{align*}
         B_g(n + r +1)(v) &= B_g(n+r)(v) + k^* *d^+(v) - \sum_{j \in L^*} \min(B_g(n+r+j)(v), E(g(n+r+j), v))\\
         &= B_g(n)(v) + k' *d^+(v) - \sum_{j \in L'} \min(B_g(n+j)(v), E(g(n+j), v)) \\
         &\quad+ k^* *d^+(v) - \sum_{j \in L^*} \min(B_g(n+r+j)(v), E(g(n+r+j), v))\\
         &= B_g(n)(v) + k *d^+(v) - \sum_{j \in L} \min(B_g(n+j)(v), E(g(n+j), v)),
    \end{align*}
hence the conclusion.
\end{proof}

The two following lemmas are capital and use the defining properties of good strategies to show that the latter attain their gain very quickly. 

\begin{lemma}\label{suffgood}
Let $\sigma \in \mathcal{F}$ be a primitive function, $v_{\iota} \in V$ with $\iota \in [[1,N]] $ and $n \in \mathbb{N}$. Let us also set
    \[
    Z = \{j \in [[0,n-1]] : \sigma(\overline{j}) = v_{\iota} \}.
    \]
If $|Z| \ge v_G(\iota)$, then $B_{\sigma}(n)(v_{\iota}) \ge E(g_{\sigma}(n), v_{\iota} )$.
\end{lemma}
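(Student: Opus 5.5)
The plan is to look back from time $n$ to the moment where the last $v_G(\iota)$ activations of $v_\iota$ began, and apply Lemma~\ref{EvolDynBound} on that window. The balance relation $\mathcal{L}v_G=0$ will then show that the chips $v_\iota$ loses to its neighbours' firings cannot exceed what it gains from its own firings.

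\textbf{Choice of the window.} Write $m=v_G(\iota)$ and list the elements of $Z$ in increasing order. Let $j_0$ be the $m$-th largest element of $Z$; it exists since $|Z|\ge m$. Then $\sigma(\overline{j_0})=v_\iota$, and $[[j_0,n-1]]$ contains exactly $m$ indices $j$ with $g_\sigma(j)=v_\iota$. Set $r=n-j_0\ge 1$.

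\textbf{Applying Lemma~\ref{EvolDynBound}.} Use the lemma with $n$ replaced by $j_0$ and window length $r$. Here $k=m$, and the bound $\min(B,E)\le E$ lets us drop the $\min$. Together with $B_\sigma(j_0)(v_\iota)\ge 0$ this gives
\[
B_\sigma(n)(v_\iota)\ \ge\ m\, d^+(v_\iota)-\sum_{j\in[[j_0,n-1]],\ g_\sigma(j)\neq v_\iota} E(g_\sigma(j),v_\iota).
\]
Because $G$ is loop-free, $E(v_\iota,v_\iota)=0$, so the restriction $g_\sigma(j)\ne v_\iota$ can be dropped. It therefore suffices to prove
\[
\sum_{j=j_0}^{n} E(g_\sigma(j),v_\iota)\ \le\ m\, d^+(v_\iota).
\]

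\textbf{Key step: comparing with a full period.} Since $g_\sigma$ is $P$-periodic and $\sigma$ takes the value $v_\iota$ exactly $m$ times on a period, the index $j_0+P$ is the $(m+1)$-th occurrence of $v_\iota$ from $j_0$ on. Hence $n-1<j_0+P$, that is, $n\le j_0+P$.

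If $n=j_0+P$, then $g_\sigma(n)=v_\iota$ and the last term vanishes by loop-freeness. In every case the nonzero terms therefore lie in $[[j_0,j_0+P-1]]$. All terms are nonnegative, so the sum is at most the sum over this full period. On a full period each $v_i$ appears exactly $v_G(i)$ times, which gives
\[
\sum_{j=j_0}^{j_0+P-1}E(g_\sigma(j),v_\iota)=\sum_{i\neq\iota} v_G(i)E(v_i,v_\iota)=d^+(v_\iota)v_G(\iota).
\]
The last equality is exactly the $\iota$-th row of $\mathcal{L}v_G=0$. This concludes the proof.

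The main point requiring care is the bookkeeping that the window $[[j_0,n]]$ never exceeds one period. The boundary case $n=j_0+P$ is absorbed by loop-freeness.
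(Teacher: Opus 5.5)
Your proof is correct and follows the paper's argument. You use the same window starting at the $v_G(\iota)$-th-to-last activation of $v_\iota$ (your $j_0$ is exactly the paper's $m$), the same application of Lemma~\ref{EvolDynBound}, and the same $\iota$-th row of $\mathcal{L}v_G=0$. The only difference is bookkeeping. You absorb the target term $E(g_\sigma(n),v_\iota)$ into a sum over $[[j_0,n]]$, bound it by one full period, and handle $n=j_0+P$ by loop-freeness. This neatly replaces the paper's case split ($g_\sigma(n)=v_\iota$, $n\le P-1$, $P=1$) and its separate count $|K_{i^*}|\le v_G(i^*)-1$.
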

\begin{proof}
Let $v_{i^*} = g_{\sigma}(n)$. If $v_{i^*} = v_{\iota}$, then $E(g_{\sigma}(n), v_{\iota} ) = 0$ and the result is trivially proved. We can thus assume that $v_{i^*} \ne v_{\iota}$ (which means $i^* \ne \iota $). For all $l \in [[0,n-1]]$, let us set
    \[
    Z(l) = \{j \in [[l,n-1]] : \sigma(\overline{j}) = v_{\iota} \}.
    \]
    By the hypothesis, we know that there exists some $l \in [[0,n-1]] $ such that $|Z(l)| =  v_G(\iota)$. We can therefore define
    \[
    m = \max \{l \in [[0,n-1]] : |Z(l)| = v_G({\iota}) \}.
    \]
    Now, let us set $r= n - m$. We shall prove that $r \le P -1$. If $n \le P - 1$, this is trivial since $r \leq n$. If $P = 1$, then $N = 1$ (since $P \ge N$), which contradicts $v_{i^*} \ne v_{\iota}$. Otherwise, we have
    \begin{align*}
        |Z(n+1-P)| &= |\{j \in [[n+1-P,n-1]] : \sigma(\overline{j}) = v_{\iota} \}| = |\{j \in [[n+1,n-1+P]] : \sigma(\overline{j}) = v_{\iota} \}|\\
        &= |\{j \in [[n,n-1+P]] : \sigma(\overline{j}) = v_{\iota} \}|, \textup{ since } g_{\sigma}(n) = \sigma(\overline{n}) \ne v_{\iota} \\ 
        &= |\{j \in [[0,P-1]] : \sigma(\overline{j}) = v_{\iota} \}|, \textup{ since } [[n,n-1+P]] = [[0,P-1]] \mod{P}\\
        &= |\{j \in [[0,P-1]] : \sigma(j) = v_{\iota} \}| = |\sigma^{-1}(\{v_{\iota}\})| = v_G(\iota).
    \end{align*}
By the definition of $m$, this means that $m \ge n+1-P$ and thus that $r \le P-1$. For all $i \in [[1,N]],$ we set
    \[
    K_i = \{ j \in [[0,r-1]] : g_{\sigma}(m+j) = v_i\} \quad \text{and} \quad L = \bigcup_{i \ne \iota} K_i = [[0,r-1]] \backslash K_{\iota}.
    \]
    By Lemma~\ref{EvolDynBound}, we obtain
    \begin{align*}
        B_{\sigma}(n)(v_{\iota}) &= B_{\sigma}(m)(v_{\iota}) + |K_{\iota}| *d^+(v_{\iota}) - \sum_{j \in L}\min(B_{\sigma}(m+j)(v_{\iota}), E(g_{\sigma}(m+j), v_{\iota}))\\
        &\ge |K_{\iota}| *d^+(v_{\iota}) - \sum_{j \in L} E(g_{\sigma}(m+j), v_{\iota}) = |K_{\iota}| *d^+(v_{\iota}) - \sum_{\underset{i \ne \iota}{i =1}}^N \sum_{j \in K_i} E(g_{\sigma}(m+j), v_{\iota})\\
        &= |K_{\iota}| *d^+(v_{\iota}) - \sum_{\underset{i \ne \iota}{i =1}}^N \sum_{j \in K_i} E(v_i, v_{\iota}) = |K_{\iota}| *d^+(v_{\iota}) - \sum_{i =1}^N \sum_{j \in K_i} E(v_i, v_{\iota})\\
        &=  |K_{\iota}| *d^+(v_{\iota}) - \sum_{i =1}^N |K_i|* E(v_i, v_{\iota}).\\
    \end{align*}
Our final goal is to bound $|K_i|$ for all $i \in [[1,N]] $. If $i = \iota$, we have
    \begin{align*}
        |K_{\iota}| &= |\{j \in [[0,r-1]] : g_{\sigma}(m+j) = v_{\iota}\}| = |\{j \in [[0,r-1]] : \sigma(\overline{m+j}) = v_{\iota}\}| \\
        &= |\{j \in [[m,n-1]] : \sigma(\overline{j}) = v_{\iota}\}| = |Z(m)| = v_G(\iota).
    \end{align*}
    If $i = i^*$, we can use $g_{\sigma}(n) = v_{i^*}$ and $r \le P-1$ to deduce that
    \begin{align*}
        |K_{i^*}| &= |\{j \in [[0,r-1]] : g_{\sigma}(m+j) = v_{i^*}\}| \le |\{j \in [[0,P-1]] : g_{\sigma}(m+j) = v_{i^*}\}| - 1\\
        &= |\{j \in [[0,P-1]] : \sigma(\overline{m+j}) = v_{i^*}\}| - 1 = |\{j \in [[0,P-1]] : \sigma(\overline{j}) = v_{i^*}\}| -1\\
        &= |\{j \in [[0,P-1]] : \sigma(j) = v_{i^*}\}| -1 = |\sigma^{-1}(\{v_{i^*}\})| -1 = v_G(i^*) - 1.
    \end{align*}
    Finally, in a similar fashion, if $i \ne \iota$ et $i \ne i^*$ we have $|K_{i}| \leq v_G(i).$ We can now write
    \begin{align*}
        B_{\sigma}(n)(v_{\iota}) &\ge |K_{\iota}| *d^+(v_{\iota}) - \sum_{i =1}^N |K_i|* E(v_i, v_{\iota})\\
        &\ge v_G(\iota) * d^+(v_{\iota}) - \left( \sum_{i =1}^N v_G(i)*E(v_i, v_{\iota}) \right) + E(v_{i^*}, v_{\iota})\\
        &= (\mathcal{L}v_G)(\iota) + E(v_{i^*}, v_{\iota})\\
        &= E(v_{i^*}, v_{\iota}),
    \end{align*}
where, at the last step, we used the fact that $v_G$ cancels the Laplacian matrix. Hence the conclusion.
\end{proof}

\begin{lemma}\label{gaingood}
For any primitive function $\sigma \in \mathcal{F}$ we have
    \[
    h_{\sigma} = \sum_{v \in V} B_{\sigma}(P-1)(v).
    \]
\end{lemma}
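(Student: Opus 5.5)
The plan is to show that the total $\sum_{v \in V} B_{\sigma}(n)(v)$ is constant for all $n \geq P-1$. Since this sequence is nondecreasing and converges to $h_\sigma$, that will give the statement. By Proposition~\ref{delta}, it suffices to show that $\delta_{g_\sigma}(v,n) = 0$ for every $v \in V$ and every $n \ge P-1$. Equivalently, we need $B_{\sigma}(n)(v) \ge E(g_{\sigma}(n),v)$ for all such $v$ and $n$. This is exactly the conclusion of Lemma~\ref{suffgood}, so the whole task reduces to checking its hypothesis $|Z| \ge v_G(\iota)$ when $n \geq P-1$.

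Fix $n \ge P-1$ and $v_\iota \in V$, and let $Z = \{j \in [[0,n-1]] : \sigma(\overline{j}) = v_\iota\}$. There are two cases.

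\textbf{Case $n \ge P$.} The interval $[[0,n-1]]$ contains $[[0,P-1]]$. The values $\sigma(\overline{j})$ for $j \in [[0,P-1]]$ run over $\sigma$ exactly once, so $|Z| \ge |\sigma^{-1}(\{v_\iota\})| = v_G(\iota)$.

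\textbf{Case $n = P-1$.} Now $[[0,n-1]] = [[0,P-2]]$ misses only the index $P-1$, and $g_\sigma(P-1) = \sigma(P-1)$.
\begin{itemize}
\item If $\sigma(P-1) \ne v_\iota$, no occurrence of $v_\iota$ is lost, so $|Z| = v_G(\iota)$ and Lemma~\ref{suffgood} applies.
\item If $\sigma(P-1) = v_\iota$, then $g_\sigma(n) = v_\iota$. Since $G$ is loop-free, $E(g_\sigma(n),v_\iota) = 0$, so the inequality $B_\sigma(n)(v_\iota) \ge 0$ holds trivially and $\delta_{g_\sigma}(v_\iota,n)=0$ directly.
\end{itemize}

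Combining the two cases, $\delta_{g_\sigma}(v,n) = 0$ for all $v$ and all $n \ge P-1$. By Proposition~\ref{delta}, $\sum_{v} B_\sigma(n+1)(v) = \sum_v B_\sigma(n)(v)$ for all $n \ge P-1$, so the sequence is constant from index $P-1$ on. Its limit $h_\sigma$ therefore equals $\sum_{v \in V} B_\sigma(P-1)(v)$.

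The only delicate point is the boundary case $n = P-1$, where $Z$ may have one element fewer than $v_G(\iota)$. This is handled by the observation above: the only vertex that can lose an occurrence is the one being fired at step $n$, and loop-freeness makes its $\delta$ vanish trivially. Everything else follows from Lemma~\ref{suffgood}.
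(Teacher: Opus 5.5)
Your proof is correct and follows essentially the same route as the paper: reduce to Lemma~\ref{suffgood} for every $n \ge P-1$, and conclude that the total $\sum_{v\in V} B_\sigma(n)(v)$ is constant from index $P-1$ on. The differences are cosmetic. The paper avoids your case split by treating all $n \ge P-1$ at once: for $v_i \neq \sigma(\overline{n})$, counting over $[[0,n-1]]$ equals counting over $[[0,n]] \supseteq [[0,P-1]]$. It also computes the final sum directly instead of going through Proposition~\ref{delta}.
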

\begin{proof}
We have to show that
    \[
    \sum_{v \in V} B_{\sigma}(n+1)(v) = \sum_{v \in V} B_{\sigma}(n)(v)
    \] for all $n \geq P-1.$ Let $n \geq P-1$. For all $v_i \ne \sigma(\overline{n})$, we again set $$Z = \{j \in [[0,n-1]] : \sigma(\overline{j}) = v_i\}$$ and we have
    \begin{align*}
        |Z| &= |\{j \in [[0,n-1]] : \sigma(\overline{j}) = v_i\}|= |\{j \in [[0,n]] : \sigma(\overline{j}) = v_i\}|\\
        &\ge |\{j \in [[0,P-1]] : \sigma(\overline{j}) = v_i\}| = |\sigma^{-1}(\{v_i\})| = v_G(i).
    \end{align*}
Hence, by Lemma~\ref{suffgood}, we deduce that $B_{\sigma}(n)(v) \ge E(g_{\sigma}(n),v)$ for all $v \ne \sigma(\overline{n})$. Since $\sigma(\overline{n}) = g_{\sigma}(n)$, we then have
    \begin{align*}
        \sum_{v \in V} B_{\sigma}(n+1)(v) &= \sum_{v \in V\setminus \{g_{\sigma}(n)\}} B_{\sigma}(n+1)(v) + B_{\sigma}(n+1)(g_{\sigma}(n))\\
       	&= \sum_{v \in V\setminus \{g_{\sigma}(n)\}} \left(B_{\sigma}(n)(v) - E(g_{\sigma}(n),v)\right) + B_{\sigma}(n)(g_{\sigma}(n)) + d^+(g_{\sigma}(n))\\
        &= \sum_{v \in V} B_{\sigma}(n)(v) + d^+(g_{\sigma}(n)) - \sum_{v \in V\setminus \{g_{\sigma}(n)\}} E(g_{\sigma}(n),v)) \\
        &= \sum_{v \in V} B_{\sigma}(n)(v)
    \end{align*} and the conclusion follows.
\end{proof}

Our final goal is therefore to show that $c = \min_{\sigma \in \mathcal{F}} h_{\sigma}$. By Theorem~\ref{First formula} we already know that $c = h^* \leq \min_{\sigma \in \mathcal{F}} h_{\sigma}$. Therefore it only remains to build a primitive function $\sigma$ such that $h_{\sigma} \leq h^*.$ Since $h^* = h_g$ for a certain optimal strategy $g$, we will fix such a $g$ and build $\sigma$ in function of $g$. The definition of $\sigma$ requires several preparatory lemmas.

\begin{lemma}\label{lemtech1}
Let $g$ be a strategy and assume there exist $v^*,w \in V$ and $r \in \mathbb{N}$ such that 
    \begin{enumerate}
        \item $E(w, v^*) \ge 1$.
        \item For all $n \in \mathbb{N}$ with $n \ge r$, $g(n) \ne v^*$.
        \item For all $s \in \mathbb{N}$, there exists $m \in \mathbb{N}$ with $m \ge s$ such that $g(m) = w$.
    \end{enumerate}
Then for all $t \in \mathbb{N}$ and all $m \geq r$, there exists $m' \ge m$ such that $$B_{g} (m')(v^*) \le \max(0,B_{g} (m)(v^*)-t) .$$
\end{lemma}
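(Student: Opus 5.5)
Since $g(n)\ne v^*$ for every $n\ge r$, the definition of the dynamical bound gives, for all $n\ge r$,
\[
B_g(n+1)(v^*)=\max\bigl(B_g(n)(v^*)-E(g(n),v^*),0\bigr)\le B_g(n)(v^*).
\]
So from time $r$ onward the sequence $(B_g(n)(v^*))_{n\ge r}$ is non-increasing. Moreover, each time $g(n)=w$ with $n\ge r$, hypothesis 1 gives
\[
B_g(n+1)(v^*)=\max\bigl(B_g(n)(v^*)-E(w,v^*),0\bigr)\le \max\bigl(B_g(n)(v^*)-1,0\bigr).
\]
The plan is to combine this monotonicity with hypothesis 3, which says that $w$ is chosen infinitely often.

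Fix $m\ge r$ and $t\in\N$. I would prove the statement by induction on $t$, with $m$ arbitrary, using the stronger form "there exists $m'\ge m$ with $B_g(m')(v^*)\le \max(0,B_g(m)(v^*)-t)$".

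For $t=0$, take $m'=m$.

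For the inductive step, assume the claim for $t$ and get $m'\ge m$ with $B_g(m')(v^*)\le\max(0,B_g(m)(v^*)-t)$. By hypothesis 3 with $s=m'$, there is $m''\ge m'$ such that $g(m'')=w$. Since $m''\ge m'\ge r$, monotonicity gives $B_g(m'')(v^*)\le B_g(m')(v^*)$. The $w$-step then gives
\[
B_g(m''+1)(v^*)\le\max\bigl(B_g(m'')(v^*)-1,0\bigr)\le \max\bigl(\max(0,B_g(m)(v^*)-t)-1,0\bigr)=\max\bigl(0,B_g(m)(v^*)-t-1\bigr).
\]
The last equality is the elementary identity $\max(\max(0,x)-1,0)=\max(0,x-1)$ for integers $x$. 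So $m''+1\ge m$ is the required index for $t+1$.

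There is no real obstacle here. The only points needing care are that $w\ne v^*$, which holds because the graph is loop-free and $E(w,v^*)\ge1$, so the step at $w$ really uses the "$v\ne g(n)$" clause. The other point is that every index involved is at least $r$, which is exactly why the statement requires $m\ge r$.
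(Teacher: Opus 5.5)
Your proof is correct and follows essentially the same route as the paper: induction on $t$ with $m'=m$ in the base case, then invoking hypothesis 3 at $s=m'$ to find a later activation of $w$. Like the paper, you use that $v^*$ is never fired after $r$ to carry the bound from $m'$ up to that activation, and the $w$-step (hypothesis 1) then subtracts at least one more chip.
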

\begin{proof}
By induction of $t$. The case $t=0$ is trivial since one can choose $m'=m.$ Let us now assume that the result holds for a certain $t \in \N$ and fix $m \geq r.$ Thus we know there exists some $m'\geq m$ such that $$B_{g} (m')(v^*) \le \max(0,B_{g} (m)(v^*)-t) .$$ By $3.$ applied to $s=m',$ there exists $m^* \geq s$ such that $g(m^*)=w.$ Since $m^* \geq m' \geq m \geq r,$ we know by $2.$ that $g(n) \neq v^*$ for all $n \in [[m',m^*]].$ Hence by $1.$ and the induction hypothesis, we get 

\begin{align*}
            B_{g} (m^*+1)(v^*) &= \max( B_{g}( m^* )(v^*) - E(g(m^*),v^*),0 )\\
            &\le \max( B_{g}(m' )(v^*) - 1 ,0 )\\
            &\le \max( \max(0,B_{g} (m)(v^*)-t) - 1 ,0 )\\
            &= \max(0,B_{g} (m)(v^*)-t - 1),
        \end{align*}
hence the conclusion.
\end{proof}

\begin{lemma}\label{lemtech2}
Let $g$ be a strategy and assume there are $v^*,w \in V$ and $r \in \mathbb{N}$ such that 
    \begin{enumerate}
        \item $E(w, v^*) \ge 1$.
        \item For all $n \in \mathbb{N}$ with $n \ge r$, $g(n) \ne v^*$.
        \item For all $s \in \mathbb{N}$, there exists $m \in \mathbb{N}$ with $m \ge s$ such that $g(m) = w$.
    \end{enumerate}
Then for all $N \in \mathbb{N}$, there exists $m \in \mathbb{N}$ such that $m \ge r$ and $\sum_{v \in V} B_{g} (m)(v) \ge N $.
\end{lemma}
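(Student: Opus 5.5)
The plan is to reuse the mechanism of Lemma~\ref{lemtech1} and track the total sum via Proposition~\ref{delta}. Whenever $g(n)=w$ and $B_g(n)(v^*)=0$, we have
\[
\delta_g(v^*,n)=\max(0,E(w,v^*)-0)\ge 1,
\]
so by Proposition~\ref{delta} and Corollary~\ref{globalgrowth} the total sum $\sum_{v}B_g(n+1)(v)$ exceeds $\sum_v B_g(n)(v)$ by at least $1$. Since the total sum is nondecreasing, it suffices to show that such steps $n\ge r$ occur infinitely often. Then after $N$ such steps the total is at least $N$, and we may take $m$ to be one step past the $N$-th occurrence, which satisfies $m\ge r$.

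To produce infinitely many such steps, I fix any $s\ge r$. By Lemma~\ref{lemtech1} with $t=B_g(s)(v^*)$ and $m=s$, there is $m'\ge s$ with $B_g(m')(v^*)=0$. Next, hypothesis 3 gives a first index $m^*\ge m'$ with $g(m^*)=w$. Between $m'$ and $m^*$ we never fire $v^*$, by hypothesis 2. Hence $B_g(\cdot)(v^*)$ cannot increase on $[[m',m^*]]$, because off-firing steps only subtract and then take $\max$ with $0$. It therefore stays equal to $0$, giving $B_g(m^*)(v^*)=0$ with $g(m^*)=w$ and $m^*\ge s$. Since $s$ was arbitrary, we obtain infinitely many indices $n_1<n_2<\dots$, all at least $r$, at which the total sum increases by at least $1$.

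Finally, since the sum is nondecreasing and starts at $\sum_v B_g(r)(v)\ge 0$ at time $r$, we get $\sum_v B_g(n_N+1)(v)\ge N$, and $m=n_N+1\ge r$ works. The case $N=0$ is trivial with $m=r$.

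The main obstacle is the observation that $B_g(\cdot)(v^*)$ is nonincreasing once $v^*$ is no longer fired, so the value $0$ obtained from Lemma~\ref{lemtech1} persists until $w$ fires. This follows directly from the third bullet of the definition of $B_g$. The care needed there is checking that $w\neq v^*$, which holds since the graph is loop-free and $E(w,v^*)\ge1$. This fact is what guarantees $\delta_g(v^*,n)$ is counted for a vertex other than the one being fired.
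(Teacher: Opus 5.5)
Your proof is correct and follows the paper's argument: Lemma~\ref{lemtech1} with $t=B_g(s)(v^*)$ brings $B_g(\cdot)(v^*)$ down to $0$, and it stays at $0$ until $w$ next fires because $v^*$ is never fired after $r$; that firing of $w$ then raises the total by at least $E(w,v^*)\ge 1$. The differences are cosmetic. The paper runs an induction on $N$, restarting from an index $m\ge r$ where the total is already $\ge N$, and it invokes the contrapositive of Corollary~\ref{globalgrowth} rather than Proposition~\ref{delta} directly. Your remark about $w\neq v^*$ is harmless but unnecessary, since $\delta_g(v^*,n)=\max(0,E(w,v^*)-B_g(n)(v^*))$ follows straight from the definition.
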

\begin{proof}
By induction on $N$. The initial case is straightforward since the dynamical bound is always positive. Let us assume that the result is verified for $N \in \N$ and thus that there exists some $m \geq r$ such that $\sum_{v \in V} B_{g} (m)(v) \ge N $. By applying the previous lemma with $t = B_g(m)(v^*)$ we deduce the existence of $m' \geq m$ such that $B_{g} (m')(v^*)=0.$ By $3.$ applied to $s=m'$, we know there exists some $m^* \geq m'$ such that $g(m^*)=w.$ Moreover, by $2.$, $g(n)\neq v^*$ for all $n \in [[m',m^*]]$. As a consequence, one has \[
    B_{g}(m^*)(v^*) \le B_{g}(m')(v^*) \le 0 < E(w,v^*).
    \] By the contraposition of Corollary~\ref{globalgrowth}, we get $$\sum_{v \in V} B_{g}(m^* + 1)(v) > \sum_{v \in V} B_{g}(m^*)(v)$$. Hence, using Corollary~\ref{globalgrowth} again, we get \begin{align*}
        \sum_{v \in V} B_{g}(m^* + 1)(v) &\ge \sum_{v \in V} B_{g}(m^*)(v) + 1\\
        &\ge \sum_{v \in V} B_{g}(m)(z) + 1\\
        &\ge N +1,
    \end{align*}
hence the conclusion, since $m^*+1 \geq r.$
\end{proof}

\begin{lemma}\label{lemtech3}
Let $g$ be an optimal strategy. For all $v \in V$ and $r \in \mathbb{N}$, there exists $n \ge r $ such that $g(n) = v$.
\end{lemma}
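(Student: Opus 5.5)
We argue by contradiction, combining strong connectivity with Lemma~\ref{lemtech2}. Suppose $g$ is optimal but some vertex is activated only finitely often. Let $W \subseteq V$ be the set of vertices activated infinitely often by $g$, and $U = V \setminus W$ the set of vertices activated finitely often, so $U \neq \emptyset$ by assumption.

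The first step is to show $W \neq \emptyset$. This is immediate: $g$ is a function $\N \to V$ with $V$ finite, so by the pigeonhole principle some vertex takes infinitely many values $n$. Since $V = U \cup W$ with both parts nonempty, strong connectivity of $G$ gives a path from a vertex of $W$ to a vertex of $U$. Along this path there is a first edge leaving $W$, i.e.\ some $w \in W$ and $v^* \in U$ with $E(w,v^*) \ge 1$.

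Next we choose $r$. Since $v^*$ is activated only finitely often, there is $r \in \N$ with $g(n) \neq v^*$ for all $n \ge r$. Since $w \in W$, for every $s$ there is $m \ge s$ with $g(m) = w$. Thus the three hypotheses of Lemma~\ref{lemtech2} hold for $v^*, w, r$. That lemma then says that $\sum_{v} B_g(m)(v)$ exceeds every $N$ for suitable $m$. As this sequence is nondecreasing by Corollary~\ref{globalgrowth}, we get $h_g = +\infty$.

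Finally we contradict optimality. Since $g$ is optimal, $h_g = h^* = c$, and $c$ is finite: by definition it is a natural number, and it is indeed attained because putting more than $M-N$ chips forces an infinite game, as recalled in the introduction. Equivalently, $h^* \le c < \infty$ by the proposition $h^* \le c$. This contradicts $h_g = +\infty$, so every vertex is activated infinitely often.

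The only step needing care is extracting the edge from $W$ to $U$ via strong connectivity; the rest is a direct application of Lemma~\ref{lemtech2}.
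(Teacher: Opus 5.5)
Your proof is correct and follows essentially the same route as the paper. You take a finitely-activated vertex and an infinitely-activated one, use strong connectivity to find an edge $w \to v^*$ with $g^{-1}(\{w\})$ infinite and $g^{-1}(\{v^*\})$ finite, apply Lemma~\ref{lemtech2} to get $h_g=+\infty$, and contradict $h_g=h^*=c<\infty$; taking the first edge leaving $W$ on a path from $W$ to $U$ usefully makes explicit the direction $E(w,v^*)\ge 1$ that the paper leaves implicit.
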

\begin{proof}
We prove this result by contradiction. Hence, suppose that there exist $v \in V$ and $r \in \N$ such that for all $n \geq r$ one has $g(n) \neq v.$ Therefore $g^{-1}(\{v\})$ is finite. On the other hand, since $g$ is defined on $\N$ with values in a finite set, there exists also a vertex $v'$ such that $g^{-1}(\{v'\})$ is infinite. Since $G$ is strongly connected, there is a path connecting $v$ and $v'$ and, on this path, it must be the case that two neighbors $v^*$ and $w$ are such that $g^{-1}(\{v^*\})$ is finite and $g^{-1}(\{w\})$ is infinite. For these two vertices, the three conditions of Lemma~\ref{lemtech2} are satisfied and we deduce that $$h_g = \lim_{m\to +\infty}\sum_{v \in V} B_{g} (m)(v) = +\infty.$$ But since $g$ is optimal by hypothesis, one must have $h_g = h^* = c < +\infty$ by Theorem~\ref{First formula}, thus a contradiction.
\end{proof}

From now on, we fix an optimal strategy $g$.

\begin{definition}
We set $$Z : V \times \N \to \mathcal{P}(\N), \quad (v,n) \mapsto \{j \in [[0,n]] : g(j) = v\}.$$ By Lemma~\ref{lemtech3}, the set $\{ n \in \mathbb{N} : |Z(v,n)| = m\}$ is non-empty for all $m \in \N_+$ and thus we may define\[
    z : V \times \mathbb{N}_+ \rightarrow \mathbb{N}, \quad  (v,m) \mapsto \min \{ n \in \mathbb{N} : |Z(v,n)| = m\}.
    \] In other words, $z(v,m)$ gives the generation at which $v$ is activated for the $m^{\text{th}}$ time.
\end{definition}

\begin{lemma}\label{zproperties}
The functions $Z$ and $z$ have the following properties:

\begin{enumerate}
\item[$\bullet$] $Z$ is increasing on the second component.
\item[$\bullet$] For all $m \in \N_+$ and $v \in V$, we have $|Z(v,z(v,m))|=m$.
\item[$\bullet$] For all $m \in \N_+$ and $v \in V$, we have $g(z(v,m))=v.$
\item[$\bullet$] $z$ is injective.
\end{enumerate}
\end{lemma}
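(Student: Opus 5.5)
The plan is to prove the four properties in the order stated, since each relies only on the definitions of $Z$ and $z$ and on the earlier items.

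\textbf{Monotonicity of $Z$.} If $n \le n'$, then $[[0,n]] \subseteq [[0,n']]$. Hence $Z(v,n) \subseteq Z(v,n')$, and in particular $|Z(v,\cdot)|$ is non-decreasing.

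\textbf{The identity $|Z(v,z(v,m))|=m$.} This is immediate. By definition, $z(v,m)$ is the minimum of the set $\{n : |Z(v,n)|=m\}$. That set is non-empty by Lemma~\ref{lemtech3}, so the minimum is attained and lies in the set.

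\textbf{The identity $g(z(v,m))=v$.} Write $n_0 = z(v,m)$ and separate two cases.
\begin{itemize}
\item[$\bullet$] If $n_0 = 0$, then $|Z(v,0)| = m \ge 1$. Since $Z(v,0) \subseteq \{0\}$, this forces $0 \in Z(v,0)$, that is, $g(0)=v$.
\item[$\bullet$] If $n_0 \ge 1$, note that $Z(v,n_0) = Z(v,n_0-1) \cup (\{n_0\} \cap g^{-1}(\{v\}))$. Suppose $g(n_0) \ne v$. Then $Z(v,n_0-1) = Z(v,n_0)$, so $|Z(v,n_0-1)| = m$, which contradicts the minimality of $n_0$. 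Hence $g(n_0)=v$.
\end{itemize}

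\textbf{Injectivity of $z$.} This is the only step needing a little care. Suppose $z(v,m) = z(v',m') = n$.
\begin{itemize}
\item[$\bullet$] By the previous item, $v = g(n) = v'$.
\item[$\bullet$] Then $m = |Z(v,n)| = |Z(v',n)| = m'$ by the second item.
\end{itemize}
Thus $(v,m)=(v',m')$. There is no real obstacle here; the only point to watch is the $n_0=0$ case in the third item, which the case split above handles.
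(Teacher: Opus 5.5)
Your proof is correct and follows the paper's argument essentially step for step. This includes the $n_0=0$ versus $n_0\ge 1$ split with the minimality contradiction for the third property, and obtaining injectivity from the second and third properties. Your added justifications for the first two properties, which the paper simply calls trivial, are fine.
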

\begin{proof}
The first two properties are trivial. For the third one, let $n=z(v,m).$ If $n=0$, the result is clear since $m > 0$ and $|Z(v,n)| = m.$ If $n \geq 1$, suppose by contradiction that $g(n) \neq v$. Hence \[
    |Z(v,n-1)| = |\{j \in [[0,n-1]] : g(j) = v\}| = |\{j \in [[0,n]] : g(j) = v\}| = |Z(v,n)| = m,
    \] which contradicts the minimality of $n$. Finally, to prove the fourth property, let $v,v' \in V$ and $m,m' \in \N_+$ be such that $z(v,m)=z(v',m').$ By the third property, one gets $$v=g(z(v,m))=g(z(v',m'))=v'.$$ Hence, by the second property, one also gets $$m=Z(v,z(v,m))=Z(v',z(v',m'))=m',$$ which proves the injectivity of $z$.
\end{proof}

We can finally define $\sigma$ in function of $g$.

\begin{definition}
Let us set \[
    W = \bigcup_{i = 1}^N \bigcup_{m=1}^{v_G(i)} \{z(v_i,m)\}.
    \]
    By the injectivity of $z$, we know that $|W| = \sum_{i=1}^N v_G(i) = P$. Let then $p : [[0,P-1]] \rightarrow W $ be an ordering of $W$ (i.e. $p(i) < p(j)$ if $i < j$) and define $\sigma$ by
    \[
    \sigma : [[0,P-1]] \rightarrow V, \quad k \mapsto g(p(k)).
    \]
\end{definition}

\begin{remark}
Notice that $p(0)=0.$ Indeed, $0 \in W$ since $0 = z(g(0),1).$
\end{remark}

\begin{proposition}
$\sigma$ is a primitive function.
\end{proposition}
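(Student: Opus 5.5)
We need to check that $\sigma$ maps $[[0,P-1]]$ into $V$ and that $|\sigma^{-1}(\{v_i\})| = v_G(i)$ for every $i \in [[1,N]]$. The first point is immediate from the definition, since $p$ takes values in $W \subset \N$ and $g$ takes values in $V$. So the whole argument reduces to counting preimages, and it rests entirely on the properties of $z$ collected in Lemma~\ref{zproperties}.

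Fix $i \in [[1,N]]$. Since $p$ is a bijection from $[[0,P-1]]$ onto $W$, we have
\[
|\sigma^{-1}(\{v_i\})| = |\{k \in [[0,P-1]] : g(p(k)) = v_i\}| = |\{w \in W : g(w) = v_i\}|.
\]
We therefore need to identify the set $W_i := \{w \in W : g(w) = v_i\}$, and we claim that
\[
W_i = \{z(v_i,m) : m \in [[1,v_G(i)]]\}.
\]

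For the inclusion $\supseteq$, each $z(v_i,m)$ with $1 \le m \le v_G(i)$ belongs to $W$ by definition. It also satisfies $g(z(v_i,m)) = v_i$ by the third property of Lemma~\ref{zproperties}.

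For the inclusion $\subseteq$, any $w \in W$ has the form $w = z(v_j,m)$ for some $j$ and some $1 \le m \le v_G(j)$. The third property then gives $g(w) = v_j$. So if $g(w) = v_i$, we get $v_j = v_i$, that is $j = i$, and hence $w \in \{z(v_i,m) : m \in [[1,v_G(i)]]\}$.

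Finally, the injectivity of $z$ (fourth property of Lemma~\ref{zproperties}) shows that the elements $z(v_i,1),\dots,z(v_i,v_G(i))$ are pairwise distinct. Hence $|W_i| = v_G(i)$, which proves that $\sigma \in \mathcal{F}$.

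There is no real obstacle here. The only point needing care is to pass explicitly through the bijection $p$, so that the count on $[[0,P-1]]$ becomes a count on $W$. The decomposition of $W$ into the disjoint pieces $W_i$ then follows from the third and fourth properties of Lemma~\ref{zproperties}. The fact that $z$ is well defined, which relies on Lemma~\ref{lemtech3} and the optimality of $g$, has already been established.
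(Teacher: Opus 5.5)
Your proof is correct and follows essentially the same route as the paper: you transfer the count through the bijection $p$ to $W \cap g^{-1}(\{v_i\})$, then use $g(z(v,m))=v$ and the injectivity of $z$ to find exactly $v_G(i)$ elements. The paper writes this last step as an indicator sum over the disjoint union defining $W$ rather than as an explicit set equality, but the content is the same.
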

\begin{proof}
Let $i \in [[1,N]]$. Using Lemma~\ref{zproperties}, we have
        \begin{align*}
            |\sigma^{-1}(\{v_i\}) | &= |\{k \in [[0,P-1]] : g(p(k)) = v_i\}|\\
            &= |\{j \in W : g(j) = v_i\}| \\
            &= |W \cap g^{-1}(\{v_i\}) |\\
            &= |\bigcup_{i' = 1}^N \bigcup_{m=1}^{v_G(i')} \{z(v_{i'},m)\}\cap g^{-1}(\{v_i\})|\\
            &= \sum_{i' = 1}^N \sum_{m=1}^{v_G(i')} \mathds{1}_{g(z(v_{i'},m)) = v_i} \\
            & = \sum_{i' = 1}^N \sum_{m=1}^{v_G(i')} \mathds{1}_{v_{i'} = v_i}\\
            &= v_G(i),
        \end{align*} hence the conclusion.
\end{proof}

\begin{definition}
We set \[
    Z_{\sigma} : V \times \mathbb{N} \rightarrow \mathcal{P}(\mathbb{N}), \quad (v,n) \mapsto \{ j \in [[0,n]] : \sigma(\overline{j}) = v \}
    \]
    and for all $k \in [[0,P-1]] $
    \[
    W_k =  [[0,p(k)-1]] \cap W, \quad R_k  = [[0,p(k)-1]] \setminus W_k.
    \]
In the above definition, if $p(k) = 0$, we use the convention $ [[0,-1]] = \emptyset $. Finally, for all $k \in [[0,P-1]] $, we set
    \[
    V_k = g(R_k) = \{v \in V: \exists j \in R_k \textup{ with } g(j) = v\}.
    \]
\end{definition}

\begin{lemma}\label{lemtech4}
Let $k \in [[0,P-1]] $ and $ v_i \in V $ such that $|Z(v_{i}, p(k))| \ge v_G(i)$. Then $|Z_{\sigma}(v_i, k)| \ge v_G(i)$.
\end{lemma}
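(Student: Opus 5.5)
The hypothesis $|Z(v_i,p(k))| \ge v_G(i)$ says that among the indices $j \in [[0,p(k)]]$ there are at least $v_G(i)$ with $g(j)=v_i$. By Lemma~\ref{zproperties}, the $m$-th activation of $v_i$ under $g$ occurs at time $z(v_i,m)$, and $|Z(v_i,\cdot)|$ is nondecreasing. So the plan is first to show that $z(v_i,m) \le p(k)$ for every $m \in [[1,v_G(i)]]$. Indeed, if $z(v_i,m) > p(k)$ for some such $m$, then $|Z(v_i,p(k))| < m \le v_G(i)$. To see this, note that $|Z(v_i,n)|$ increases by at most one per step. Hence the first $n$ with $|Z(v_i,n)| = v_G(i)$ is $z(v_i,v_G(i))$, and for $n < z(v_i,m)$ we have $|Z(v_i,n)| \le m-1$. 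This contradicts the hypothesis.

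Next, all the elements $z(v_i,1),\dots,z(v_i,v_G(i))$ belong to $W$ by definition, and they are distinct by the injectivity of $z$. Since $p$ is the increasing enumeration of $W$, each such element equals $p(k')$ for a unique $k' \in [[0,P-1]]$. Moreover, $p(k') \le p(k)$ forces $k' \le k$, because $p$ is strictly increasing. For each of these $k'$, we then have $\sigma(k') = g(p(k')) = g(z(v_i,m)) = v_i$, using the third property of Lemma~\ref{zproperties}.

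Finally, since $k \le P-1$, every $j \in [[0,k]]$ satisfies $\overline{j} = j$. Hence $Z_\sigma(v_i,k) = \{j \in [[0,k]] : \sigma(j) = v_i\}$ contains the $v_G(i)$ distinct indices $k'$ found above. This gives $|Z_\sigma(v_i,k)| \ge v_G(i)$.

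The only delicate point is the first step, namely the monotonicity and unit-increment property of $n \mapsto |Z(v_i,n)|$. We need it to conclude that reaching count $v_G(i)$ by time $p(k)$ forces each of the first $v_G(i)$ activations to occur by time $p(k)$. This is elementary, and the rest is bookkeeping with the ordering $p$.
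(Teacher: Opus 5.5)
Your proof is correct and follows essentially the same route as the paper. The paper also takes $S_i=\{z(v_i,1),\dots,z(v_i,v_G(i))\}\subseteq W$, shows every element is at most $p(k)$, and pulls these elements back through the increasing bijection $p$ to $v_G(i)$ distinct indices $l\le k$ with $\sigma(l)=v_i$. The one minor difference is that you derive $z(v_i,m)\le p(k)$ from the unit-increment property and the minimality in the definition of $z$, whereas the paper reads it off from $|Z(v_i,j)|\le v_G(i)\le |Z(v_i,p(k))|$, implicitly using $g(j)=v_i$.
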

\begin{proof}
Let
        \[
        S_i = \bigcup_{m=1}^{v_G(i)} \{z(v_i,m)\}.
        \]
It is clear that $S_i \subseteq W$ and that $|S_i| = v_G(i)$. Moreover, using Lemma~\ref{zproperties}, for all $j \in S_i$ one has $g(j) = v_i$ and $|Z(v_i, j)| \le v_G(i) \le |Z(v_{i}, p(k))|$. We deduce that $j \le p(k)$ for all $j \in S_i$. Let us now set $T_i = p^{-1}(S_i)$. Since $p$ is an increasing bijection, we get that $|T_i| = v_G(i)$ as well as $l \le k \le P-1$ for all $l \in T_i$. Furthermore, for all $l \in T_i$, there exists $j \in S_i$ such that $p(l) = j$. Hence we have
        \[
        \sigma(\overline{l}) = \sigma(l) = g(p(l)) = g(j) = v_i.
        \]
and therefore
        \[
            v_G(i) = |T_i| \le |\{ l \in [[0,k]] : \sigma(\overline{l}) = v_i\}| = |Z_{\sigma}(v_i, k)|.
       \]
Hence the conclusion.
\end{proof}

\begin{lemma}
Let $ k \in [[1,P-1]], v_i \in V$ and $j \in [[p(k-1),p(k)-1]]$. If $|Z(v_{i},j)| \ge v_G(i) $, then $ |Z(v_{i},p(k-1))| \ge v_G(i) $.
\end{lemma}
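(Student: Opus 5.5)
We argue by contradiction. Suppose $|Z(v_i,j)| \ge v_G(i)$ for some $j \in [[p(k-1),p(k)-1]]$, but $|Z(v_i,p(k-1))| < v_G(i)$. Set $m = |Z(v_i,p(k-1))| + 1$, so $m \le v_G(i)$. We will show that the index $z(v_i,m)$ lies in $W$ and falls strictly between $p(k-1)$ and $p(k)$. Since $p$ lists the elements of $W$ in increasing order, $p(k-1)$ and $p(k)$ are consecutive elements of $W$, so this is impossible.

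The first step is to place $z(v_i,m)$. Since $Z$ is increasing in its second argument (Lemma~\ref{zproperties}) and $|Z(v_i,j)| \ge v_G(i) \ge m$, the set $\{n : |Z(v_i,n)| \ge m\}$ contains $j$. Its minimum is $z(v_i,m)$, because $|Z(v_i,\cdot)|$ increases by at most one at each step and starts at $0$ or $1$, so it hits the value $m$ exactly. Hence $z(v_i,m) \le j \le p(k)-1 < p(k)$. In the other direction, $|Z(v_i,p(k-1))| = m-1 < m$. By monotonicity, every $n \le p(k-1)$ has $|Z(v_i,n)| \le m-1$, so $z(v_i,m) > p(k-1)$.

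The second step is membership in $W$: since $1 \le m \le v_G(i)$, the definition of $W$ gives $z(v_i,m) \in W$ directly.

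It remains to derive the contradiction. We have an element $w \in W$ with $p(k-1) < w < p(k)$. Because $p : [[0,P-1]] \to W$ is an increasing bijection, $w = p(l)$ for some $l$, and $p(k-1) < p(l) < p(k)$ forces $k-1 < l < k$. No integer satisfies this. Hence $|Z(v_i,p(k-1))| \ge v_G(i)$.

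The only delicate point is justifying that $z(v_i,m) \le j$, that is, that the minimum defining $z$ is attained at or before $j$. This follows because $|Z(v_i,n)|$ increases by steps of $0$ or $1$ as $n$ grows, so it cannot skip the value $m$ on the way to a value at least $m$ at $n=j$. The rest is bookkeeping with the ordering $p$.
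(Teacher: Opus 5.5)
Your proof is correct and follows the paper's argument: assume the contrary, exhibit an element of $W$ lying strictly between the consecutive elements $p(k-1)$ and $p(k)$, and contradict the fact that $p$ enumerates $W$ in increasing order. The only differences are that you take the index $z(v_i,m)$ with $m = |Z(v_i,p(k-1))|+1$ where the paper takes $j^* = z(v_i,v_G(i))$, and that you spell out the unit-step argument giving $z(v_i,m)\le j$, which the paper leaves implicit.
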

\begin{proof}
By contradiction, assume that $|Z(v_{i},p(k-1))| < v_G(i) $ and set $j^* = z(v_i, v_G(i))$. By Lemma~\ref{zproperties} we know that $g(j^*) = v_i$ and that $|Z(v_{i},j^*)| = v_G(i) \leq  |Z(v_{i},j)|$. Hence
        \[
        p(k-1) < j^* \le j < p(k).
        \]
Since $j^* \in W$, there exists $r^* \in [[0,P-1]] $ such that $p(r^*) = j^*$ and since $p$ is strictly increasing, that means that $k-1 < r^*< k$, hence a contradiction.
\end{proof}

\begin{lemma}\label{invk}
Let $ k \in [[1,P-1]]$. If $v_{\iota} \in V_k$, then $ B_{\sigma}(k)(v_{\iota}) \ge E(g_{\sigma}(k), v_{\iota})  $.
\end{lemma}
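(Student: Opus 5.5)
The idea is to reduce the statement to Lemma~\ref{suffgood} applied with $n=k$. To do so, we show that $v_{\iota}$ has already been activated at least $v_G(\iota)$ times by $g_{\sigma}$ among the generations $[[0,k-1]]$. This count will be obtained by passing through the optimal strategy $g$, via the sets $W$, $R_k$ and Lemma~\ref{lemtech4}.

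\textbf{Step 1: a vertex of $V_k$ has been activated many times by $g$.} Since $v_{\iota}\in V_k$, there is some $j\in R_k$ with $g(j)=v_{\iota}$, so $j\le p(k)-1$ and $j\notin W$. Set $m=|Z(v_{\iota},j)|$, which is at least $1$ because $j\in Z(v_{\iota},j)$. Since $g(j)=v_{\iota}$, we have $|Z(v_{\iota},j-1)|=m-1$ when $j\ge 1$. Hence $j$ is the smallest $n$ with $|Z(v_{\iota},n)|=m$, i.e. $j=z(v_{\iota},m)$. If we had $m\le v_G(\iota)$, then $j$ would belong to $W$ by definition of $W$, which is a contradiction. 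Therefore $m\ge v_G(\iota)+1$. By the monotonicity of $Z$ in its second component (Lemma~\ref{zproperties}), together with $j<p(k)$, we get
\[
|Z(v_{\iota},p(k))|\ge |Z(v_{\iota},j)|\ge v_G(\iota)+1> v_G(\iota).
\]

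\textbf{Step 2: transfer to $\sigma$.} Lemma~\ref{lemtech4} now gives $|Z_{\sigma}(v_{\iota},k)|\ge v_G(\iota)$, that is,
\[
|\{l\in[[0,k]] : \sigma(\overline{l})=v_{\iota}\}|\ge v_G(\iota).
\]

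\textbf{Step 3: conclude via Lemma~\ref{suffgood}.} We distinguish two cases according to $g_{\sigma}(k)=\sigma(k)$ (recall $k\le P-1$, so $\overline{k}=k$).
\begin{itemize}
\item If $\sigma(k)=v_{\iota}$, then $E(g_{\sigma}(k),v_{\iota})=E(v_{\iota},v_{\iota})=0$ because $G$ is loop-free, and the inequality is trivial.
\item Otherwise the index $l=k$ does not contribute to $Z_{\sigma}(v_{\iota},k)$. Hence the set $\{l\in[[0,k-1]] : \sigma(\overline{l})=v_{\iota}\}$ has the same cardinality, which is at least $v_G(\iota)$. This is exactly the hypothesis of Lemma~\ref{suffgood} with $n=k$, and that lemma yields $B_{\sigma}(k)(v_{\iota})\ge E(g_{\sigma}(k),v_{\iota})$.
\end{itemize}

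The only delicate point is Step 1: one must check that an activation of $v_{\iota}$ at a generation outside $W$ is necessarily at least the $(v_G(\iota)+1)$-th activation of $v_{\iota}$. This rests on the identity $j=z(v_{\iota},|Z(v_{\iota},j)|)$ whenever $g(j)=v_{\iota}$, which follows directly from the minimality in the definition of $z$. Once this is established, Steps 2 and 3 are direct applications of Lemma~\ref{lemtech4} and Lemma~\ref{suffgood}.
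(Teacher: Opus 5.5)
Your proof is correct, and its skeleton is the paper's. Both arguments show that an activation $j\notin W$ of $v_{\iota}$ satisfies $|Z(v_{\iota},j)|>v_G(\iota)$, then invoke Lemma~\ref{lemtech4}, then Lemma~\ref{suffgood} with $n=k$. You also justify the identity $j=z(v_{\iota},|Z(v_{\iota},j)|)$, which the paper only asserts.

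The one real difference is the index at which you apply Lemma~\ref{lemtech4}. The paper applies it at $k-1$, so it must first establish $|Z(v_{\iota},p(k-1))|\ge v_G(\iota)$. That requires splitting into $j<p(k-1)$, handled by monotonicity, and $p(k-1)\le j\le p(k)-1$, handled by the unlabelled lemma just before this one. The payoff is that it obtains $|Z_{\sigma}(v_{\iota},k-1)|\ge v_G(\iota)$, which is verbatim the hypothesis of Lemma~\ref{suffgood}.

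You apply it at $k$, where monotonicity and $j<p(k)$ suffice. So that auxiliary lemma, which the paper uses nowhere else, becomes unnecessary. The price is your endpoint case split on $\sigma(k)$.

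Incidentally, your first case can never occur. If $\sigma(k)=g(p(k))=v_{\iota}$, then $p(k)\in W$ forces $p(k)=z(v_{\iota},m')$ with $m'\le v_G(\iota)$, i.e.\ $|Z(v_{\iota},p(k))|\le v_G(\iota)$. This contradicts the bound $v_G(\iota)+1$ from your Step 1, so the loop-freeness argument is harmless but redundant.
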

\begin{proof}
The idea is to apply Lemma~\ref{suffgood} and, for that purpose, it is enough to show that if $v_{\iota} \in V_k$ then $|Z_{\sigma}(v_{\iota},k-1)| \geq v_G(\iota).$ Let then $v_{\iota} \in V_k.$ By definition, there exists $j \in R_k$ such that $g(j) = v_{\iota}$. Since $R_k \subseteq [[0,p(k)-1]]\backslash W$, one has $j \notin W$ and thus
        \[
        j \notin \bigcup_{i = 1}^N \bigcup_{m=1}^{v_G(i)} \{z(v_i,m)\}.
        \]
Now, assume by contradiction that $|Z(v_{\iota}, j)| \le v_G(\iota)$ and set $m = |Z(v_{\iota}, j)|$. That implies that $z(v_{\iota},m)=j$ with $m \le v_G(\iota)$, which contradicts $j \notin W$. Hence $|Z(v_{\iota}, j)| > v_G(\iota)$ with $j \le p(k) - 1$. If $p(k-1) \le j$, then by the previous lemma, $|Z(v_{\iota}, p(k-1))| \ge v_G(\iota)$ and one directly gets the conclusion by Lemma~\ref{lemtech4}. Otherwise, if $j < p(k-1)$, then we also have $|Z(v_{\iota}, p(k-1))| \ge v_G(\iota)$, this time using the fact that $Z$ is increasing on the second component. Again by Lemma~\ref{lemtech4}, we obtain $|Z_{\sigma}(v_{\iota}, k-1)| \ge v_G(\iota)$ and the conclusion follows.
\end{proof}

\begin{lemma}
Let $k \in [[1, P-1]] $ and $v \notin V_k $. Then $B_{g}(p(k-1)+1)(v) \ge B_{g}(p(k) )(v) $.
\end{lemma}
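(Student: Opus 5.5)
We argue by tracking the value of $B_g(\cdot)(v)$ step by step on the interval $[[p(k-1)+1, p(k)]]$. Since $p$ is strictly increasing and lists the elements of $W$ in order, there is no element of $W$ strictly between $p(k-1)$ and $p(k)$. Hence every index $j \in [[p(k-1)+1, p(k)-1]]$ lies in $[[0,p(k)-1]]\setminus W_k = R_k$. If this interval is empty, i.e. $p(k)=p(k-1)+1$, the inequality is an equality and there is nothing to prove.

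Otherwise, fix $j \in [[p(k-1)+1, p(k)-1]]$. Since $j \in R_k$, we have $g(j) \in g(R_k) = V_k$. As $v \notin V_k$ by hypothesis, it follows that $g(j) \neq v$. By the defining recursion of the dynamical bound we then get
\[
B_g(j+1)(v) = \max\bigl(B_g(j)(v) - E(g(j),v), 0\bigr) \le B_g(j)(v),
\]
because $E(g(j),v)\ge 0$ and $B_g(j)(v) \ge 0$. In particular, $v$ is never activated by $g$ at any index strictly between $p(k-1)$ and $p(k)$, so its bound can only decrease there.

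Chaining these inequalities for $j = p(k-1)+1, \dots, p(k)-1$ yields $B_g(p(k))(v) \le B_g(p(k-1)+1)(v)$, which is the claim. One could equivalently invoke Lemma~\ref{EvolDynBound} with $n = p(k-1)+1$ and $r = p(k)-p(k-1)-1$: there $K=\emptyset$, so the right-hand side is $B_g(n)(v)$ minus a sum of nonnegative terms.

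The only point needing care is the identification of the indices strictly between $p(k-1)$ and $p(k)$ as elements of $R_k$. This uses two facts: that $p$ enumerates $W$ increasingly, and that these indices are $< p(k)$, so that they fall in $[[0,p(k)-1]]$. Beyond this, the argument is routine.
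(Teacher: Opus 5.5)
Your proof is correct and essentially identical to the paper's: the indices strictly between $p(k-1)$ and $p(k)$ lie in $R_k$, so $g(j)\neq v$ there and $B_g(\cdot)(v)$ is non-increasing step by step on that range. Your alternative via Lemma~\ref{EvolDynBound} is also valid, since you treat the empty-interval case ($r=0$) separately.
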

\begin{proof}
We shall actually prove that, for all $j \in [[p(k-1)+1, p(k)-1]] $, one has $$B_{g}(j)(v) \ge B_{g}(j+1)(v),$$ which implies the thesis. Let then $j \in [[p(k-1)+1, p(k)-1]]$. Since $p(k-1) < j < p(k) $ and since $p$ is a strictly increasing bijection, we deduce that $j \in R_k$. Since $v \notin V_k $, this implies that $g(j) \ne v$. Therefore
        \[
        B_{g}(j+1)(v) = \max( B_{g}(j)(v) - E(g(j),v),0 ) \le B_{g}(j)(v)
        \]
as announced.
\end{proof}

\begin{lemma}\label{notinvk}
Let $k \in [[0,P-1]] $ and $v \notin V_k $. Then $B_{g}(p(k))(v) \le B_{\sigma}(k)(v)$.
\end{lemma}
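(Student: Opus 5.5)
We argue by induction on $k \in [[0,P-1]]$, proving simultaneously for all $v \notin V_k$ that $B_{g}(p(k))(v) \le B_{\sigma}(k)(v)$. For $k=0$ we have $p(0)=0$ by the Remark, so both sides equal $B(0)(v)=0$ and there is nothing to prove. For the induction step, fix $k \in [[1,P-1]]$ and $v \notin V_k$.

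First note that $R_{k-1} \subseteq R_k$: since $p$ is strictly increasing, $[[0,p(k-1)-1]] \subseteq [[0,p(k)-1]]$, and the elements of $W$ are removed from both. Hence $V_{k-1} \subseteq V_k$, so $v \notin V_{k-1}$ and the induction hypothesis gives $B_g(p(k-1))(v) \le B_\sigma(k-1)(v)$. By the previous lemma we also have $B_g(p(k))(v) \le B_g(p(k-1)+1)(v)$. It therefore suffices to compare one step of each dynamical bound:
\[
B_g(p(k-1)+1)(v) \le B_\sigma(k)(v).
\]

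Both bounds are updated with the same vertex, since $g(p(k-1)) = \sigma(k-1) = g_\sigma(k-1)$ by the definition of $\sigma$ and because $k-1 \le P-1$. Call this vertex $w$. If $v = w$, both sides are obtained by adding $d^+(v)$ to the two quantities compared by the induction hypothesis, so the inequality persists. If $v \neq w$, both sides are obtained by applying the map $x \mapsto \max(x - E(w,v),0)$, which is nondecreasing, so the inequality again persists. This completes the induction.

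I do not expect a serious obstacle. The two points requiring care are the monotonicity $V_{k-1} \subseteq V_k$, which is what allows the induction hypothesis to be used for the same $v$, and the identification of the vertex fired at step $p(k-1)$ of $g$ with the one fired at step $k-1$ of $g_\sigma$. The edge case $p(k-1)+1 = p(k)$ is harmless: the previous lemma then holds with equality.
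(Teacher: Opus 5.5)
Your proof is correct and follows the paper's argument: induction on $k$ with base case $p(0)=0$, then the previous lemma to reduce to $B_g(p(k-1)+1)(v)$, the identification $g(p(k-1))=\sigma(k-1)=g_\sigma(k-1)$, and monotonicity of the one-step update. Your explicit check that $R_{k-1}\subseteq R_k$, hence $V_{k-1}\subseteq V_k$, supplies a step the paper leaves implicit when it applies the induction hypothesis to the same $v$.
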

\begin{proof}
By induction on $k$. The case $k = 0$ is clear since $p(0) = 0$, and
        \[
        B_{g}(p(0))(v) = 0 = B_{\sigma}(0)(v).
        \]
Now let $k \in [[1,P-1]] $ and assume that the result holds for $k-1$. Using the previous lemma, the equality $g(p(k-1))=\sigma(k-1)=\sigma(\overline{k-1})=g_{\sigma}(k-1)$ and the induction hypothesis, one gets
        \begin{align*}
            B_{g}(p(k))(v) &\le B_{g}(p(k-1)+1)(v)\\
            &=\begin{cases}
                B_{g}(p(k-1))(v) + d^+(v) & \textup{if } g(p(k-1)) = v\\
                \max(B_{g}(p(k-1))(v)- E(g(p(k-1)),v), 0) & \textup{if } g(p(k-1)) \ne v\\
            \end{cases}   \\
            &\le \begin{cases}
                B_{\sigma}(k-1)(v) + d^+(v) & \textup{if } g(p(k-1)) = v\\
                \max(B_{\sigma}(k-1)(v)- E(g(p(k-1)),v), 0) & \textup{if } g(p(k-1)) \ne v\\
            \end{cases}   \\
            &= \begin{cases}
                B_{\sigma}(k-1)(v) + d^+(v) & \textup{if } g_{\sigma}(k-1) = v\\
                \max(B_{\sigma}(k-1)(v)- E(g_{\sigma}(k-1),v), 0) & \textup{if } g_{\sigma}(k-1) \ne v\\
            \end{cases}\\
            &= B_{\sigma}(k)(v).
        \end{align*}
Hence the conclusion.
\end{proof}

\begin{lemma}
For all $k \in [[0,P-1]] $, one has $$\left(\sum_{v \in V} B_{g}(p(k)+1)(v) \right) -  \left(\sum_{v \in V} B_{g}(p(k))(v) \right) \ge \left(\sum_{v \in V} B_{\sigma}(k+1)(v) \right) -  \left(\sum_{v \in V} B_{\sigma}(k)(v) \right) $$
\end{lemma}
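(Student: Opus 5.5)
By Proposition~\ref{delta}, both sides of the inequality are sums of the corresponding $\delta$ terms. The left-hand side equals $\sum_{v \in V} \delta_g(v,p(k))$ and the right-hand side equals $\sum_{v \in V} \delta_{g_\sigma}(v,k)$. Write $v^* = g(p(k)) = \sigma(k) = g_\sigma(k)$, which is the same vertex on both sides. Then
\[
\delta_g(v,p(k)) = \max\bigl(0, E(v^*,v) - B_g(p(k))(v)\bigr), \qquad \delta_{g_\sigma}(v,k) = \max\bigl(0, E(v^*,v) - B_\sigma(k)(v)\bigr).
\]
Since every term is nonnegative, it suffices to prove the inequality vertex by vertex: $\delta_g(v,p(k)) \ge \delta_{g_\sigma}(v,k)$ for every $v \in V$. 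Summing over $v$ then gives the statement.

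We split into two cases according to whether $v$ belongs to $V_k$.

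\textbf{Case $v \in V_k$.} Then $k \ge 1$, because $R_0 = \emptyset$ (indeed $p(0)=0$), so $V_0 = \emptyset$. Lemma~\ref{invk} gives $B_\sigma(k)(v) \ge E(g_\sigma(k),v)$, hence $\delta_{g_\sigma}(v,k) = 0$. The inequality is then immediate, since $\delta_g(v,p(k)) \ge 0$.

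\textbf{Case $v \notin V_k$.} Lemma~\ref{notinvk} gives $B_g(p(k))(v) \le B_\sigma(k)(v)$. Since $x \mapsto \max(0, E(v^*,v) - x)$ is nonincreasing, we get $\delta_g(v,p(k)) \ge \delta_{g_\sigma}(v,k)$.

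Both cases are now handled. The only points needing care are that the same vertex $v^*$ is fired at step $p(k)$ for $g$ and at step $k$ for $g_\sigma$, which holds by the definition $\sigma(k) = g(p(k))$ with $\overline{k} = k$ for $k \le P-1$, and the edge case $k=0$ in the first case. Since all the substantive work has already been done in Lemmas~\ref{invk} and \ref{notinvk}, I do not expect a real obstacle.
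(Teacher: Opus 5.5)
Your proof is correct and follows the paper's argument: Proposition~\ref{delta} reduces the claim to a vertex-wise comparison of the $\delta$ terms, with Lemma~\ref{invk} handling $v \in V_k$ and Lemma~\ref{notinvk} plus monotonicity handling $v \notin V_k$. The only difference is that the paper treats $k=0$ separately by checking $B_g(1)=B_\sigma(1)$ directly, whereas you fold it into the general case by noting $V_0=\emptyset$ and that Lemma~\ref{notinvk} already covers $k=0$; this is equally valid.
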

\begin{proof}
The case $k = 0$ is immediate. Indeed, since $p(0) = 0$, we have
        \[
        \sum_{v \in V} B_{g}(p(0))(v) = \sum_{v \in V} B_{\sigma}(0)(v) = 0
        \]
        and therefore it is enough to show that
        \[
        \sum_{v \in V} B_{g}(1)(v) \ge \sum_{v \in V} B_{\sigma}(1)(v).
        \]
        This is obviously the case because
        \begin{align*}
            B_{g}(1)(v) &= \begin{cases}
                d^+(v) &\textup{if } v= g(0)\\
                0 &\textup{otherwise}
            \end{cases}\\
            &= \begin{cases}
                d^+(v) &\textup{if } v= g_{\sigma}(0)\\
                0 &\textup{otherwise}
            \end{cases}\\
            &= B_{\sigma}(1)(v).
        \end{align*}
Let us now prove the result for $k \in [[1,P-1]] $. By Proposition~\ref{delta}, we have to show that 
        \[
        \sum_{v \in V} \delta_{g}(v, p(k)) \ge \sum_{v \in V} \delta_{g_{\sigma}}(v,k).
        \]
        If $v \in V_k$, then $ B_{\sigma}(k)(v) \ge E(g_{\sigma}(k), v)  $ by Lemma~\ref{invk} and thus
        \[
        \delta_{g_{\sigma}}(v,k) = \max(0 , E(g_{\sigma}(k), v) - B_{\sigma}(k)(v)) = 0 \le \delta_{g}(v, p(k)).
        \]
        If $v \notin V_k $, then $B_{g}(p(k))(v) \le B_{\sigma}(k)(v)$ by Lemma~\ref{notinvk} and thus
        \[
        \delta_{g_{\sigma}}(v,k) = \max(0 , E(g_{\sigma}(k), v) - B_{\sigma}(k)(v)) \le  \max(0 , E(g(p(k)), v) - B_{g}(p(k))(v)) = \delta_{g}(v, p(k)).
        \]
The conclusion follows directly.
    \end{proof}
    
We can finally prove the main theorem of this section.

\begin{proof}[Proof of Theorem~\ref{Second formula}]
As already explained, it only remains to show that $h_{\sigma} \leq h^*.$ We first show by induction on $k \in [[0,P-1]]$ that 
    \[
    \sum_{v \in V} B_{\sigma}(k)(v) \le \sum_{v \in V} B_{g}(p(k))(v).
    \]
The initial case is trivial since $p(0)=0.$ Let us now assume that the result holds for $k\in [[0,P-2]]$. Using the previous lemma and Corollary~\ref{globalgrowth}, we obtain  \begin{align*}
        \sum_{v \in V} B_{g}(p(k+1))(v) &\ge \sum_{v \in V} B_{g}(p(k) + 1)(v)\\
        &\ge \left(\sum_{v \in V} B_{g}(p(k))(v) \right) + \left(\sum_{v \in V} B_{\sigma}(k+1)(v) \right) -  \left(\sum_{v \in V} B_{\sigma}(k)(v) \right)\\
        &\ge \sum_{v \in V} B_{\sigma}(k+1)(v)
    \end{align*}
which settles this induction step. We can now write this inequality with $k=P-1$ to get \[
    h_{\sigma} = \sum_{v \in V} B_{\sigma}(P-1)(v) \le \sum_{v \in V} B_{g}(p(P-1))(v) \le h^*,
    \] 
hence the conclusion.
\end{proof}

\begin{remark}
The reader may notice that, in these various proofs, we only use the definition of $v_G$ at one crucial moment, i.e. in the proof of Lemma~\ref{suffgood}. Moreover, we only use the fact that $v_G$ cancels $L$, not its minimality. The minimality's purpose is obviously to have a canonical vector and to make the formula $\min_{\sigma \in \mathcal{F}} \left(\sum_{v \in V} B_{\sigma}(P-1)(v) \right)$ as quick to compute as possible. Unfortunately, $P$ may be exponentially large (see for instance the family of graphs build in the proof of Theorem $2$ in \cite{Vanp15}) and is generally bounded by $(2D)^{N-1}$ (see \cite{Bjor92}, Proposition 4.10.).
\end{remark} 

\section{Primitive extension and primitive feedback number}

The main goal of this section is to rework the equality $c=\min_{\sigma \in \mathcal{F}} \left(\sum_{v \in V} B_{\sigma}(P-1)(v) \right)$ in order to write $c$ as a more "intrinsic property" of the graph and, thereby, equalizing it to a "primitive" version of the feedback number. 

\begin{definition}\label{prim}
Let $\hat{V}$ be a set of cardinality $P$ whose elements are denoted by $v_{i,j}$ for $1 \le i \le N$ and $1\le j \le v_G(i)$. The primitive extension of $G$ over $\hat{V}$ or simply \textit{the primitive extension of $G$}, is the graph $\hat{G} = (\hat{V}, \hat{E})$ defined by $\hat{E}(v_{i,j}, v_{k,l}) = E(v_i,v_k)$ for all $i,j,k,l.$
Moreover we define the function \[
    \xi : \hat{V} \rightarrow V, \quad v_{i,j} \mapsto v_i.
    \]
\end{definition} 

From now we will simultaneously work with several different graphs, hence we will use subscript notations, e.g. $d_G^+(v)$ instead of $d^+(v).$

\begin{lemma}\label{weaklemma}
Let $w_0, \dots, w_{P-1}$ be an ordering on $\hat{V}$ and $\tilde{G} = (\hat{V}, \tilde{E})$ a loop-free graph such that 
\begin{itemize}
        \item [$(1)$] For all $i < j$, one has $\tilde{E}(w_i, w_j) = 0.$
        \item [$(2)$] For all $m \in [[1,P-1]] $ and $v \in V$, if $\sum_{w \in \hat{V}} \mathds{1}_{\xi(w) = v} \tilde{E}(w_m, w) < E(\xi(w_m), v) $, then for all $i \in [[0,m-1]] $ such that $\xi(w_i) = v $ one has
        \[
        d^+_G(v) - \sum_{j=0}^{m}\tilde{E}(w_j, w_i) \le 0.
        \]
\end{itemize}
Let $\sigma \in \mathcal{F}$ be the primitive function defined by
    \[
    \sigma : [[0,P-1]] \rightarrow V, \quad m \mapsto \xi(w_m) .
    \]
For all $v_k \in V$, we then have
    \[
    B_{\sigma}(P)(v_k) \ge d^+_G(v_k)*v_G(k) - \sum_{l=1}^{v_G(k)}d^{-}_{\tilde{G}}(v_{k,l}).
    \]
\end{lemma}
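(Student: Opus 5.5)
The plan is to prove, by induction on $n \in [[0,P]]$, an explicit lower bound on $B_{\sigma}(n)(v_k)$ whose value at $n=P$ is exactly the right-hand side of the statement. Fix $v_k \in V$. For $n \in [[0,P]]$ set
\[
\Psi(n) = \sum_{\substack{i \in [[0,n-1]] \\ \xi(w_i) = v_k}} \Bigl( d^+_G(v_k) - \sum_{j=i+1}^{n-1} \tilde{E}(w_j, w_i) \Bigr),
\]
which is deliberately not truncated at $0$. I will show $B_{\sigma}(n)(v_k) \ge \Psi(n)$ for all $n \in [[0,P]]$.

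First, $\Psi(P)$ is the desired quantity. Exactly $v_G(k)$ indices $i$ satisfy $\xi(w_i)=v_k$, and these $w_i$ are the copies $v_{k,l}$. By hypothesis $(1)$, $\tilde{E}(w_j,w_i)=0$ whenever $j<i$, and $\tilde{E}(w_i,w_i)=0$ since $\tilde{G}$ is loop-free. Hence $\sum_{j=i+1}^{P-1}\tilde{E}(w_j,w_i) = d^-_{\tilde{G}}(w_i)$, and therefore
\[
\Psi(P) = d^+_G(v_k)\, v_G(k) - \sum_{l=1}^{v_G(k)} d^-_{\tilde G}(v_{k,l}).
\]
For later use, set $a_m = \sum_{w \in \hat V} \mathds{1}_{\xi(w)=v_k}\tilde{E}(w_m,w)$. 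By $(1)$ again, the only $w$ with $\xi(w)=v_k$ that can receive edges from $w_m$ are among $w_0,\dots,w_{m-1}$. It follows that $\Psi(m+1) = \Psi(m) - a_m$ whenever $\sigma(m) \neq v_k$.

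The induction starts from $B_{\sigma}(0)(v_k) = 0 = \Psi(0)$. For the step from $m$ to $m+1$, with $m \le P-1$, I note that $g_\sigma(m) = \sigma(m) = \xi(w_m)$ and split into three cases.
\begin{itemize}
\item If $\sigma(m)=v_k$, then $B_\sigma$ increases by $d^+_G(v_k)$. $\Psi$ increases by the same amount: a new term with index $i=m$ appears, its inner sum is empty, and no existing inner sum changes, because the new index $j=m$ would only contribute $\tilde{E}(w_m,w_i)$ with $i<m$, which is an edge into the new vertex's predecessors and is already counted only if $\xi(w_m)\neq v_k$. More simply, $\Psi(m+1)-\Psi(m) = d^+_G(v_k) - \sum_{i<m,\,\xi(w_i)=v_k}\tilde{E}(w_m,w_i) \le d^+_G(v_k)$, which is all that is needed.
\item If $\sigma(m)\neq v_k$ and $a_m \ge E(\xi(w_m),v_k)$, then
\[
B_\sigma(m+1)(v_k) \ge B_\sigma(m)(v_k) - E(\xi(w_m),v_k) \ge \Psi(m) - a_m = \Psi(m+1).
\]
\item If $\sigma(m)\neq v_k$ and $a_m < E(\xi(w_m),v_k)$, this is where hypothesis $(2)$ enters. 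When $m \ge 1$, it says that for every $i \le m-1$ with $\xi(w_i)=v_k$ we have $d^+_G(v_k) - \sum_{j=0}^{m}\tilde{E}(w_j,w_i) \le 0$. By $(1)$ the sum over $j \le m$ coincides with the sum over $j \in [[i+1,m]]$, so every summand of $\Psi(m+1)$ is nonpositive. Hence $\Psi(m+1)\le 0 \le B_\sigma(m+1)(v_k)$. When $m=0$, no index $i<0$ exists, so $\Psi(1)=0$ trivially.
\end{itemize}

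The main obstacle, and the point to get right, is the choice of the non-truncated potential $\Psi$. A truncated version (positive parts) would be the natural first guess, but it breaks in the second case, because a single step can subtract up to $a_m$ from the potential while $B_\sigma$ loses only $E(\xi(w_m),v_k)\le a_m$ in a way that does not match term by term. Without truncation, the second case is a one-line comparison. The third case is handled wholesale by $(2)$, which forces the entire potential to be nonpositive at that moment. Once the invariant holds, evaluating it at $n=P$ gives the statement. The remaining care points are the index bookkeeping, namely which edges of $\tilde G$ are counted in which inner sum, which is precisely what $(1)$ controls.
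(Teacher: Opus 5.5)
Your proof is correct and is essentially the paper's. Your $\Psi(m+1)$ coincides with the paper's inductive bound $\sum_{i=0}^{m}\mathds{1}_{\xi(w_i)=v_k}\bigl(d^+_G(v_k)-\sum_{j=0}^{m}\tilde E(w_j,w_i)\bigr)$, since the terms with $j\le i$ vanish by $(1)$ and loop-freeness; your three cases are the paper's three cases, and you use $(2)$ in the same way to force the whole bound to be nonpositive. The one flaw is the first half of your explanation in the case $\sigma(m)=v_k$: existing inner sums can change there, because the lemma does not forbid edges of $\tilde G$ inside a fiber of $\xi$. The inequality $\Psi(m+1)-\Psi(m)\le d^+_G(v_k)$ that you give next is the right justification, and it is what the paper uses.
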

\begin{proof}
We shall first prove by induction on $m \in [[0,P-1]] $ that 
    \[
    B_{\sigma}(m+1)(v_k) \ge \sum_{i = 0}^{m} \mathds{1}_{\xi(w_i) = v_k} \left( d^+_{G}(v_k) - \sum_{j=0}^{m} \tilde{E}(w_j, w_i) \right). \quad (*)
    \]
The initial case $m = 0$ is clear. Indeed, if $\xi(w_0) = v_k$, then
    \[
    B_{\sigma}(1)(v_k) = d^+_G(v_k) = \mathds{1}_{\xi(w_0) = v_k} \left( d^+_{G}(v_k) - \tilde{E}(w_0, w_0) \right)
    \]
and if $\xi(w_0) \neq v_k$, then
    \[
    B_{\sigma}(1)(v_k) = 0 = \mathds{1}_{\xi(w_0) = v_k} \left( d^+_{G}(v_k) - \tilde{E}(w_0, w_0) \right).
    \]
Let us now assume that the result holds for $m\in [[0,P-2]]$. In order to prove it for $m+1$, we distinguish three cases.

\medskip
\noindent
\textbf{First case}: $v_k \ne \xi(w_{m+1})$ and $\sum_{w \in \hat{V}} \mathds{1}_{\xi(w) = v} \tilde{E}(w_{m+1}, w) \ge E(\xi(w_{m+1}), v).$

\medskip
\noindent
In this case, we have

\begin{align*}
        B_{\sigma}(m+2)(v_k) &\ge B_{\sigma}(m+1)(v_k) - E(g_{\sigma}(m+1), v_k)\\
        &\ge \left( \sum_{i = 0}^{m} \mathds{1}_{\xi(w_i) = v_k} \left( d^+_{G}(v_k) - \sum_{j=0}^{m} \tilde{E}(w_j, w_i) \right) \right) - E(\xi(w_{m+1}), v_k )\\
        &\ge \left( \sum_{i = 0}^{m} \mathds{1}_{\xi(w_i) = v_k} \left( d^+_{G}(v_k) - \sum_{j=0}^{m} \tilde{E}(w_j, w_i) \right) \right) - \sum_{w \in \hat{V}} \mathds{1}_{\xi(w) = v_k} \tilde{E}(w_{m+1}, w)\\
        &\underset{(1)}= \left( \sum_{i = 0}^{m} \mathds{1}_{\xi(w_i) = v_k} \left( d^+_{G}(v_k) - \sum_{j=0}^{m} \tilde{E}(w_j, w_i) \right) \right) - \sum_{i = 0}^{m} \mathds{1}_{\xi(w_i) = v_k} \tilde{E}(w_{m+1}, w_i)\\
        &=  \sum_{i = 0}^{m} \mathds{1}_{\xi(w_i) = v_k} \left( d^+_{G}(v_k) - \sum_{j=0}^{m+1} \tilde{E}(w_j, w_i) \right) = \sum_{i = 0}^{m+1} \mathds{1}_{\xi(w_i) = v_k} \left( d^+_{G}(v_k) - \sum_{j=0}^{m+1} \tilde{E}(w_j, w_i) \right).
    \end{align*}
\noindent   
\textbf{Second case}: $v_k \ne \xi(w_{m+1})$ and $\sum_{w \in \hat{V}} \mathds{1}_{\xi(w) = v} \tilde{E}(w_{m+1}, w) < E(\xi(w_{m+1}), v).$

\medskip
\noindent
In this case, using $(2)$, we have  

 \begin{align*}
        B_{\sigma}(m+2)(v_k) &\ge 0\\
        &\ge \sum_{i = 0}^{m} \mathds{1}_{\xi(w_i) = v_k} \left( d^+_{G}(v_k) - \sum_{j=0}^{m+1} \tilde{E}(w_j, w_i) \right)\\
        &= \sum_{i = 0}^{m+1} \mathds{1}_{\xi(w_i) = v_k} \left( d^+_{G}(v_k) - \sum_{j=0}^{m+1} \tilde{E}(w_j, w_i) \right).
    \end{align*} 
\noindent   
\textbf{Third case}: $v_k = \xi(w_{m+1})$.

\medskip
\noindent
In this case, we have

\begin{align*}
        B_{\sigma}(m+2)(v_k) &= B_{\sigma}(m+1)(v_k) + d^+_G(v_k)\\
        &\ge \left( \sum_{i = 0}^{m} \mathds{1}_{\xi(w_i) = v_k} \left( d^+_{G}(v_k) - \sum_{j=0}^{m} \tilde{E}(w_j, w_i) \right) \right) + d^+_G(v_k)\\
        &\ge \left( \sum_{i = 0}^{m} \mathds{1}_{\xi(w_i) = v_k} \left( d^+_{G}(v_k) - \sum_{j=0}^{m+1} \tilde{E}(w_j, w_i) \right) \right) + d^+_G(v_k)\\
        &\ge \sum_{i = 0}^{m+1} \mathds{1}_{\xi(w_i) = v_k} \left( d^+_{G}(v_k) - \sum_{j=0}^{m+1} \tilde{E}(w_j, w_i) \right).
    \end{align*}
This concludes the induction step. We can now apply $(*)$ to $m=P-1$ to get 
\begin{align*}
        B_{\sigma}(P)(v_k) &\ge \sum_{i = 0}^{P-1} \mathds{1}_{\xi(w_i) = v_k} \left( d^+_{G}(v_k) - \sum_{j=0}^{P-1} \tilde{E}(w_j, w_i) \right)\\
        &= \sum_{l=1}^{v_G(k)} \left( d^+_{G}(v_k) - \sum_{w \in \hat{V}} \tilde{E}(w, v_{k,l}) \right)\\
        &= d^+_G(v_k)*v_G(k) - \sum_{l=1}^{v_G(k)}d^{-}_{\tilde{G}}(v_{k,l}).
    \end{align*}
Hence the conclusion.
\end{proof}

We shall immediately reinforce this lemma under further conditions on $\tilde{G}$.

\begin{lemma}\label{stronglemma}
Let $w_0, \dots, w_{P-1}$ be an ordering on $\hat{V}$ and $\tilde{G} = (\hat{V}, \tilde{E})$ a loop-free graph such that
    \begin{itemize}
        \item [$(1)$] For all $i < j$, one has $\tilde{E}(w_i, w_j) = 0.$
        \item [$(2)$] For all $v \in V$ and $m \in [[0,P-1]] $, $\sum_{w \in \hat{V}} \mathds{1}_{\xi(w) = v} \tilde{E}(w_m, w) \le E(\xi(w_m), v) $.
        \item [$(3)$] For all $m \in [[0,P-1]] , d_G^+(\xi(w_m)) \ge d^{-}_{\tilde{G}}(w_m).$
        \item [$(4)$] $\tilde{G}$ is maximal, in the sense that for any loop-free graph $G' = (\hat{V}, E')$ satisfying conditions $(1) $ to $ (3)$, we have
        $
        M' = \sum_{w \in \hat{V}} d^+_{G'}(w) \le \sum_{w \in \hat{V}} d^+_{\tilde{G}}(w) = \tilde{M}.
        $
    \end{itemize}

\medskip
\noindent
Let $\sigma \in \mathcal{F}$ be the primitive function defined by
    \[
    \sigma : [[0,P-1]] \rightarrow V, \quad m \mapsto \xi(w_m) .
    \]
    For all $v_k \in V$, we then have
    \[
    B_{\sigma}(P)(v_k) \ge d^+_G(v_k)*v_G(k) - \sum_{l=1}^{v_G(k)}d^{-}_{\tilde{G}}(v_{k,l}).
    \]
\end{lemma}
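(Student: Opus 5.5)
The plan is to reduce Lemma~\ref{stronglemma} to Lemma~\ref{weaklemma}. Its conclusion is identical, and its hypothesis $(1)$ is the same, so it suffices to check that a graph $\tilde{G}$ satisfying $(1)$--$(4)$ also satisfies hypothesis $(2)$ of Lemma~\ref{weaklemma}. Fix $m \in [[1,P-1]]$ and $v \in V$ such that $\sum_{w \in \hat{V}} \mathds{1}_{\xi(w) = v} \tilde{E}(w_m, w) < E(\xi(w_m), v)$. Fix also $i \in [[0,m-1]]$ with $\xi(w_i) = v$. We must show that $d^+_G(v) - \sum_{j=0}^{m}\tilde{E}(w_j, w_i) \le 0$.

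The argument is by contradiction, using the maximality $(4)$. Suppose that $\sum_{j=0}^{m}\tilde{E}(w_j, w_i) < d^+_G(v)$. Define $G' = (\hat{V}, E')$ from $\tilde{G}$ by adding a single edge $w_m \to w_i$, i.e.\ $E'(w_m,w_i) = \tilde{E}(w_m,w_i)+1$, with all other values unchanged. We then check $(1)$--$(3)$ for $G'$.
\begin{itemize}
\item[$\bullet$] Loop-freeness holds since $i \neq m$.
\item[$\bullet$] Condition $(1)$ holds since $i < m$, so the new edge goes from a later vertex to an earlier one.
\item[$\bullet$] Condition $(2)$ only changes for the source $w_m$ and the target class $v$. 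There the sum increases by one, which is allowed because the inequality was strict.
\item[$\bullet$] Condition $(3)$ only changes at $w_i$, where $d^-_{G'}(w_i) = d^-_{\tilde{G}}(w_i)+1$. We need $d^-_{\tilde{G}}(w_i) + 1 \le d^+_G(v)$.
\end{itemize}
Since $G'$ has $\tilde{M}+1$ edges, this would contradict $(4)$, provided the last point holds.

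The main obstacle is precisely that last point. The contradiction hypothesis only controls the partial sum $\sum_{j=0}^{m}\tilde{E}(w_j,w_i)$, whereas $d^-_{\tilde{G}}(w_i)$ also counts edges coming from $w_j$ with $j>m$. By $(1)$, every edge into $w_i$ comes from some $w_j$ with $j>i$, but some of these $j$ may exceed $m$. To fix this, I would not add an edge outright but \emph{redirect} one. If $d^-_{\tilde{G}}(w_i) < d^+_G(v)$, adding the edge is legal and we are done. Otherwise there is $j > m$ with $\tilde{E}(w_j,w_i) \ge 1$. In that case, define $G'$ by removing one edge $w_j \to w_i$ and adding one edge $w_m \to w_i$. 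The in-degree of $w_i$ is unchanged, so $(3)$ holds. Condition $(2)$ still holds at $w_m$ by strictness, and decreasing edges from $w_j$ never breaks $(2)$. Condition $(1)$ holds since $m > i$. The edge count is also unchanged, so this gives no contradiction by itself, but the new graph again satisfies $(1)$--$(4)$ with the same ordering.

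I would then iterate this redirection, decreasing the number of edges into $w_i$ coming from indices $>m$. This either terminates with the forbidden strict inequality at $w_m$ being used up, or we reach $\sum_{j\le m}\tilde{E}(w_j,w_i) = d^-(w_i) \ge d^+_G(v)$. Since the conclusion $B_\sigma(P)(v_k)\ge \dots$ depends only on the in-degrees $d^-_{\tilde{G}}(v_{k,l})$, which redirection preserves, it is legitimate to replace $\tilde{G}$ by a modified graph. So I would make this rigorous as follows. Among all graphs satisfying $(1)$--$(4)$ with the same in-degrees, choose one minimizing a potential such as $\sum_{j>i}\, j\,\tilde{E}(w_j,w_i)$, the total weighted index of edge sources. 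For this minimizer, either hypothesis $(2)$ of Lemma~\ref{weaklemma} holds, or a redirection strictly lowers the potential, or an addition contradicts $(4)$. Lemma~\ref{weaklemma} then gives the bound for the minimizer, and hence for $\tilde{G}$, since the bound depends only on in-degrees.
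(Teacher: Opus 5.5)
Your proof is correct and follows the paper's argument. The paper likewise reduces to Lemma~\ref{weaklemma}. It uses maximality $(4)$ to find an edge $w_r \to w_{i^*}$ with $r > m^*$, and redirects it to $w_{m^*} \to w_{i^*}$; this preserves in-degrees and $(1)$--$(4)$ while strictly decreasing $\mathcal{T}(\tilde{E}) = \sum_{i,j}\tilde{E}(w_i,w_j)(i-j)$. Your potential differs from $\mathcal{T}$ only by the constant $\sum_i i\, d^-_{\tilde{G}}(w_i)$ on graphs with fixed in-degrees. Your choice of a minimizer replaces the paper's induction on $\mathcal{T}$ and makes its separate base case unnecessary.
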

\begin{proof}
Let $\mathcal{T}$ be the function defined by
    \[
    \mathcal{T} : \mathbb{N}^{\hat{V} \times \hat{V}} \rightarrow \mathbb{Z} : \tilde{E} \mapsto \sum_{i=0}^{P-1}\sum_{j=0}^{P-1} \tilde{E}(w_i,w_j)(i-j).
    \]
We shall prove by induction on $M = \mathcal{T}(\tilde{E})$ that if  $\tilde{G}=(\hat{V},\tilde{E})$ satisfies the conditions $(1) $ to $(4)$, then for all $v_k \in V$,
    \[
    B_{\sigma}(P)(v_k) \ge d^+_G(v_k)*v_G(k) - \sum_{l=1}^{v_G(k)}d^{-}_{\tilde{G}}(v_{k,l}).
    \]
Note that if $\tilde{G}$ satisfies $(1)$, then $M \ge 0$. Hence the initial case is $M=0.$ Let us prove this initial case and let $\tilde{G}=(\hat{V},\tilde{E})$ be a loop-free graph such that $\mathcal{T}(\tilde{E}) = 0.$ It implies that $\tilde{E}(w_i,w_j) = 0$ for all $i,j \in [[0,P-1]].$ Therefore, by the conditions $(2)$ and $(3)$, and the maximality $(4)$ of $\tilde{G}$, for all $i,j \in [[0,P-1]] $ with $i > j$, either $E(\xi(w_i), \xi(w_j)) = 0$ or $d^+_G(\xi(w_j)) = 0$. Indeed, otherwise we could add an edge from $w_i$ to $w_j$ while preserving conditions $(1)$ to $(3)$, contradicting the maximality. Consider $v = \xi(w_{0}).$ On the one hand, if $d^+_G(v) = 0$, then $V$ is a singleton since $G$ is strongly connected. On the other hand, if $E(\xi(w_m),v) = 0$ for all $m \in [[1,P-1]]$, then $d^-_G(v) = 0$, since $E(\xi(w_0),v)=E(v,v)=0$ and we can also conclude that $V$ is a singleton. Therefore it is clear that $P=1$ and $\hat{V}= \{v_{1,1}\}.$ We immediately get \[
    B_{\sigma}(P)(v) = B_{\sigma}(1)(v) = d^+_G(v) = 0 = d^+_G(v)*v_G(1) - \sum_{l=1}^{v_G(1)}d^{-}_{\tilde{G}}(v_{1,l}),
    \] which proves the initial case.
    
\medskip
\noindent
Let us now fix a loop-free graph $\tilde{G}=(\hat{V},\tilde{E})$, verifying the conditions $(1)$ to $(4)$, such that $ \mathcal{T}(\tilde{E}) > 0$ and assume that the result holds for all loop-free graphs $G'=(\hat{V},E')$, verifying the same conditions, such that $\mathcal{T}(E') < \mathcal{T}(\tilde{E}).$ We shall prove that it holds for $\tilde{G}$ as well. If $\tilde{G}$ satisfies the second condition of Lemma~\ref{weaklemma}, then we get the conclusion by applying this lemma. Otherwise, there exists $m^* \in [[1,P-1]] $, $i^* \in [[0,m^*-1]]$ and $v^* \in V$ such that $\sum_{w \in \hat{V}} \mathds{1}_{\xi(w) = v^*} \tilde{E}(w_{m^*}, w) < E(\xi(w_{m^*}), v^*) $, $\xi(w_{i^*}) = v^*$ and
    $
    d^+_G(v^*) - \sum_{j=0}^{m^*}\tilde{E}(w_j, w_{i^*}) > 0.
    $ 
We can note that $\xi(w_{m^*}) \ne v^*$ since we have
    $
    \sum_{w \in \hat{V}} \mathds{1}_{\xi(w) = v^*} \tilde{E}(w_{m^*}, w) < E(\xi(w_{m^*}), v^*).
    $
We shall now show by contradiction that there exists $r \in [[m^*+1, P-1]] $ such that $\tilde{E}(w_r, w_{i^*}) > 0$. Thus, let us assume that $\tilde{E}(w_r, w_{i^*}) = 0$ for all $r \in [[m^*+1, P-1]] $. We obtain
\[
d^+_G(v^*) - d^-_{\tilde{G}}(w_{i^*}) = d^+_G(v^*) - \sum_{j=0}^{m^*}\tilde{E}(w_j, w_{i^*}) > 0.
\]
Let us then define the loop-free graph $G' = (\hat{V}, E')$ by
    \[
    E' : \hat{V} \times \hat{V} \rightarrow \mathbb{N}, \quad (w_i,w_j) \mapsto \tilde{E}(w_i,w_j) + \mathds{1}_{(i,j) = (m^*,i^*) }.
    \]
We shall show that $G'$ satisfies the conditions $(1)$ to $ (3)$. Property $(1)$ is immediate. Indeed, if $i < j$, then $(i,j) \ne (m^*,i^*)$ and 
\[
    E'(w_i, w_j) = \tilde{E}(w_i,w_j) + \mathds{1}_{(i,j) = (m^*,i^*) }  = \tilde{E}(w_i,w_j) = 0.
    \]
Next, we prove Property $(2)$. For all $m \in [[1,P-1]] $ and $ v \in V$, we have 
\[
E(\xi(w_m), v) \ge \sum_{w \in \hat{V}} \mathds{1}_{\xi(w) = v} \tilde{E}(w_m, w) = \sum_{w \in \hat{V}} \mathds{1}_{\xi(w) = v} E'(w_m, w) - \mathds{1}_{m=m^*, v=v^*},
\]
which proves Property $(2)$ for $G'$ unless $m=m^*$ and $v=v^*$. But in that case, by assumption, we have 
\[
\sum_{w \in \hat{V}} \mathds{1}_{\xi(w) = v^*} \tilde{E}(w_{m^*}, w) < E(\xi(w_{m^*}), v^*),
\]
which means that Property $(2)$ is still verified for $G'$ in that case. We now prove Property $(3)$. If we let $m \in [[0,P-1]] $, we have
\[
d^-_{G'}(w_m) = d^-_{\tilde{G}}(w_m) + \mathds{1}_{m=i^*} \le d_G^+(\xi(w_m)) + \mathds{1}_{m=i^*},
\]
which proves property $(3)$ unless $m = i^*$. If $m=i^*$, then since $d_G^+(\xi(w_{i^*})) - d^{-}_{\tilde{G}}(w_{i^*}) > 0  $, Property $(3)$ is verified in that case as well. We have shown that $G'$ satisfies conditions $(1)$ to $(3)$. Since the number of edges of $G'$ is strictly greater than the number of edges of $\tilde{G}$, we obtain the required contradiction as $\tilde{G}$ is supposed to be maximal by property $(4)$. We thus have shown that there exists $r \in [[m^*+1, P-1]] $ such that $\tilde{E}(w_r, w_{i^*}) > 0$. 

\medskip
\noindent
Now, we define $G' = (\hat{V}, E')$ by
    \[
    E' : \hat{V} \times \hat{V} \rightarrow \mathbb{N}, \quad (w_i,w_j) \mapsto \tilde{E}(w_i,w_j) + \mathds{1}_{(i,j) = (m^*,i^*) } - \mathds{1}_{(i,j)=(r,i^*)  }.
    \]
This time, we want to show that $G'$ is a loop-free graph which satisfies conditions $(1)$ to $(4)$ and has $\mathcal{T}(E') < \mathcal{T}(\tilde{E})$.
The proof of the conditions $(1)$ to $(3)$ is identical as previously, if only slightly simpler. Since $G'$ satisfies the conditions $(1)$ to $(3)$ and has the same number of edges as $\tilde{G}$, it is clear that $G'$ must be maximal as well and thus satisfies Property $(4)$. We now need to show that $\mathcal{T}(E') < \mathcal{T}(\tilde{E})$. This follows from
\begin{align*}
    \mathcal{T}(E') &= \sum_{i=0}^{P-1}\sum_{j=0}^{P-1} E'(w_i,w_j)(i-j) = \sum_{i=0}^{P-1}\sum_{j=0}^{P-1} \left(\tilde{E}(w_i,w_j) + \mathds{1}_{(i,j) = (m^*,i^*) } - \mathds{1}_{(i,j)=(r,i^*)} \right)(i-j)\\
    &= \mathcal{T}(\tilde{E}) + (m^* - i^*) - (r - i^*) = \mathcal{T}(\tilde{E}) + m^* - r <\mathcal{T}(\tilde{E}).
\end{align*}
We can now apply the induction hypothesis on $G'$ to deduce that for all $v_k \in V$, we have
\begin{align*}
    B_{\sigma}(P)(v_k) &\ge d^+_{G}(v_k) * v_G(k) - \sum_{l=1}^{v_G(k)} d^{-}_{G'}(v_{k,l}) = d^+_{G}(v_k) * v_G(k) - \sum_{l=1}^{v_G(k)} d^{-}_{\tilde{G}}(v_{k,l}).
\end{align*}
Hence the conclusion.
\end{proof}

\begin{theorem}\label{Genfeednum}
One has
    \[
    c =  \left(\sum_{i=1}^N d^+_G(v_i) * v_G(i) \right) - a,
    \]
where is $a$ is the biggest number of edges of $\hat{G}$ that one can keep so that the associated subgraph $\tilde{G} = (\hat{V}, \tilde{E})$ respects the three following conditions: 
    \begin{itemize}
        \item [$\bullet$] $\tilde{G}$ is acyclic.
        \item [$\bullet$] For all $v \in V$ and $w \in \hat{V}$, one has $\sum_{w' \in \hat{V}} \mathds{1}_{\xi(w') = v} \tilde{E}(w,w') \le E(\xi(w), v).$
        \item [$\bullet$] For all $w \in \hat{V}$, one has $d^{-}_{\tilde{G}}(w) \le d^+_G(\xi(w)).$
    \end{itemize}
\end{theorem}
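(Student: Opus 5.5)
We prove the two inequalities $c \le D - a$ and $c \ge D - a$ separately, where $D = \sum_{i=1}^N d^+_G(v_i) v_G(i)$. Both rest on Theorem~\ref{Second formula}, i.e. $c = \min_{\sigma \in \mathcal{F}} \sum_{v} B_\sigma(P-1)(v)$. By Lemma~\ref{gaingood}, this minimum may be replaced by $\sum_v B_\sigma(P)(v)$, since the sum is stationary from $P-1$ on.

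\textbf{Upper bound $c \le D - a$.} Take a subgraph $\tilde G$ of $\hat G$ realising $a$. Since $\tilde G$ is acyclic, there is a topological ordering in which all edges go from later to earlier vertices. Label this ordering $w_0,\dots,w_{P-1}$ so that $\tilde E(w_i,w_j)=0$ for $i<j$, which is condition $(1)$ of Lemma~\ref{stronglemma}. The two bullet constraints are exactly conditions $(2)$ and $(3)$ of that lemma.

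Condition $(4)$ needs care: $\tilde G$ is maximal among acyclic subgraphs of $\hat G$ satisfying the constraints. Any $G'$ satisfying $(1)$--$(3)$ for this fixed ordering is acyclic by $(1)$. By $(2)$ it satisfies $E'(w,w') \le E(\xi(w),\xi(w')) = \hat E(w,w')$, so it is a subgraph of $\hat G$. Hence $M' \le a = \tilde M$, and $(4)$ holds.

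Let $\sigma(m)=\xi(w_m)$; this is a primitive function. Lemma~\ref{stronglemma} then gives $B_\sigma(P)(v_k) \ge d^+_G(v_k)v_G(k) - \sum_l d^-_{\tilde G}(v_{k,l})$. This is the wrong direction for an upper bound, so we will instead prove the matching \emph{upper} estimate $B_\sigma(P)(v_k) \le d^+_G(v_k) v_G(k) - \sum_l d^-_{\tilde G}(v_{k,l})$, or at least the same inequality after summing over $k$.

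\textbf{Lower bound $c \ge D - a$.} Take $\sigma$ optimal and read off from the dynamical bound a subgraph $\tilde G$ with $D - \sum_v B_\sigma(P)(v) \le |\tilde E|$. The natural construction sets $w_m$ to be the copy $v_{i,j}$ of $\sigma(m)$, where $j$ indexes the occurrence of $v_i$ in $\sigma$. Whenever step $m$ actually removes chips from vertex $v$, i.e. the term $\min(B_\sigma(m)(v),E(\sigma(m),v))$ in Lemma~\ref{EvolDynBound}, we distribute those removed units as edges from $w_m$ to earlier copies $w_i$ with $\xi(w_i)=v$, $i<m$. Each copy $w_i$ receives at most $d^+_G(v)$ units, since it can only absorb the chips it created when fired.

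This $\tilde G$ satisfies the constraints. It is acyclic because every edge goes backwards in the ordering. The first bullet holds because at most $E(\sigma(m),v)$ units are removed at step $m$. The second bullet holds because of the capacity $d^+_G(v)$ per copy. Lemma~\ref{EvolDynBound} with $n=0$, $r=P$ gives
\[
\sum_v B_\sigma(P)(v) = D - \sum_{m}\sum_{v}\min\bigl(B_\sigma(m)(v),E(\sigma(m),v)\bigr) = D - |\tilde E| \ge D - a .
\]
Hence $c \ge D-a$. The delicate point is the bookkeeping of the allocation. I would do it by a FIFO or LIFO assignment of units per vertex, each unit of $B_\sigma(m)(v)$ remembering which firing of $v$ created it; this is possible because $B_\sigma$ starts at $0$.

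\textbf{Finishing the upper bound.} Apply the same identity to the $\sigma$ built from $\tilde G$: $\sum_v B_\sigma(P)(v) = D - \sum_{m,v}\min(B_\sigma(m)(v),E(\sigma(m),v))$. It then suffices to show that the total removal is at least $a$, and this is what the lower estimate of Lemma~\ref{stronglemma} provides. Summing that estimate over $k$ gives $\sum_v B_\sigma(P)(v) \ge D - a$. Combined with the allocation argument above, which gives $\sum_v B_\sigma(P)(v) = D - |\tilde E'| \le D - a$ for the induced graph $\tilde E'$, we obtain $\sum_v B_\sigma(P)(v) = D - a$. Therefore
\[
c \le \sum_v B_\sigma(P-1)(v) = \sum_v B_\sigma(P)(v) = D-a .
\]

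The main obstacle is the allocation argument in the lower bound: making precise that removed chips can be charged to earlier copies while respecting both capacity constraints simultaneously.
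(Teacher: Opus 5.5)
Your lower bound $c\ge D-a$ is correct, and it takes a different, shorter route than the paper. The paper gets this direction from Lemma~\ref{stronglemma}, an exchange argument by induction on $\mathcal{T}$ that rests on Lemma~\ref{weaklemma}. Your token-charging argument instead builds, directly from an optimal $\sigma$, an acyclic subgraph of $\hat G$ that obeys both constraints and has exactly $D-c$ edges. Any rule for choosing which tokens to remove works (FIFO/LIFO is unnecessary), since $B_\sigma(m)(v)$ equals the number of live tokens at $v$.

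The genuine gap is the other direction, $c\le D-a$. In your ``finishing'' paragraph both ingredients point the wrong way:
\begin{itemize}
\item Lemma~\ref{stronglemma} gives $\sum_v B_\sigma(P)(v)\ge D-a$. This says the total removal is \emph{at most} $a$, not at least $a$.
\item The allocation argument applied to this $\sigma$ produces a graph $\tilde E'$ satisfying the three constraints. Hence $|\tilde E'|\le a$, and so $\sum_v B_\sigma(P)(v)=D-|\tilde E'|\ge D-a$, not $\le D-a$.
\end{itemize}
Obtaining $\le$ would require $|\tilde E'|\ge a$, which is exactly what has to be proved. So the asserted equality $\sum_v B_\sigma(P)(v)=D-a$ is unsupported, and your verification of condition $(4)$ contributes nothing to this direction.

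The missing idea is to dominate the dynamical bound directly by the bookkeeping of $\tilde G$. Take $w_0,\dots,w_{P-1}$ a topological order of an optimal $\tilde G$ (edges only from later to earlier vertices) and $\sigma(m)=\xi(w_m)$. Put
\[
U_m(v)=\sum_{i<m,\ \xi(w_i)=v}\Bigl(d^+_G(v)-\sum_{j<m}\tilde E(w_j,w_i)\Bigr),
\]
and prove $B_\sigma(m)(v)\le U_m(v)$ by induction on $m$.
\begin{itemize}
\item If $v=\sigma(m)$, then $U$ grows by exactly $d^+_G(v)$. Indeed, $w_m$ receives no edge from $w_0,\dots,w_m$ (by the order and loop-freeness). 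It also sends none to other copies of $v$ (second bullet with $E(v,v)=0$).
\item If $v\ne\sigma(m)$, then $U_{m+1}(v)=U_m(v)-e$ with $e=\sum_{i<m,\ \xi(w_i)=v}\tilde E(w_m,w_i)\le E(\sigma(m),v)$ by the second bullet. Moreover $U_{m+1}(v)\ge 0$ by the third bullet. Therefore $B_\sigma(m+1)(v)=\max(B_\sigma(m)(v)-E(\sigma(m),v),0)\le\max(U_m(v)-e,0)=U_{m+1}(v)$.
\end{itemize}
Since $\sum_v U_P(v)=D-a$, Theorem~\ref{Second formula} and Corollary~\ref{globalgrowth} give $c\le\sum_v B_\sigma(P-1)(v)\le\sum_v B_\sigma(P)(v)\le D-a$. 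This is the induction $(*)$ in the first half of the paper's proof. Once you have it, neither Lemma~\ref{stronglemma} nor condition $(4)$ is needed anywhere in your argument.
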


\begin{proof}
We begin by showing that $c \le \left(\sum_{i=1}^N d^+_G(v_i) * v_G(i) \right) - a$. This is the "easiest" inequality since it does not required the previous technical lemma. Let $\tilde{G} = (\hat{V}, \tilde{E})$ be the subgraph of $\hat{G}$ associated to $a$, so that it satisfies the three conditions of the theorem and $a = \sum_{v \in \hat{V}} d^{-}_{\tilde{G}}(v)$. Since $\tilde{G}$ is acyclic, there is an ordering $w_0, \dots, w_{P-1}$ of $\hat{V}$ such that $d^+_{\tilde{G}}(w_0) = 0$ and $\tilde{E}(w_j, w_i) = 0$ for all $j < i$. Moreover, we define as usual the primitive function $\sigma \in \mathcal{F}$ by
    \[
    \sigma : [[0,P-1]] \rightarrow V, \quad m \mapsto \xi(w_m) .
    \]
We shall prove by induction on $m \in [[0,P-1]] $ that
    \[
    B_{\sigma}(m+1)(v) \le \sum_{i = 0}^{m} \mathds{1}_{\xi(w_i) = v} \left( d^+_{G}(v) - \sum_{j=0}^{m} \tilde{E}(w_j, w_i) \right), \quad (*)
    \]
for all $v \in V.$ The initial case $m=0$ is trivial. Indeed, if $v \ne \xi(w_0)$,
    \[
    B_{\sigma}(1)(v) = 0 = \mathds{1}_{\xi(w_0) = v} \left( d^+_{G}(v) - \tilde{E}(w_0, w_0) \right)
    \]
and if $v =\xi(w_0),$
    \[
    B_{\sigma}(1)(v) = d^+_G(v) = \mathds{1}_{\xi(w_0) = v} \left( d^+_{G}(v) - \tilde{E}(w_0, w_0) \right).
    \]
For the induction step, let us assume that the result holds for $m$ and prove it for $m+1$ by distinguishing three cases.

\medskip
\noindent
\textbf{Case 1:} $v \ne \xi(w_{m+1})$ and $\max(B_{\sigma}(m+1)(v)  , E(g_{\sigma}(m+1),v)) = B_{\sigma}(m+1)(v)$.

\medskip
\noindent
We have 
\begin{align*}
        B_{\sigma}(m+2)(v) &= B_{\sigma}(m+1)(v) - E(g_{\sigma}(m+1),v)\\
        &\le \left( \sum_{i = 0}^{m} \mathds{1}_{\xi(w_i) = v} \left( d^+_{G}(v) - \sum_{j=0}^{m} \tilde{E}(w_j, w_i) \right)\right) - E(g_{\sigma}(m+1),v)\\
        &= \left( \sum_{i = 0}^{m+1} \mathds{1}_{\xi(w_i) = v} \left( d^+_{G}(v) - \sum_{j=0}^{m} \tilde{E}(w_j, w_i) \right)\right) - E(\xi(w_{m+1}),v)\\
        &\le \left( \sum_{i = 0}^{m+1} \mathds{1}_{\xi(w_i) = v} \left( d^+_{G}(v) - \sum_{j=0}^{m} \tilde{E}(w_j, w_i) \right)\right) - \sum_{w' \in \hat{V}} \mathds{1}_{\xi(w') = v} \tilde{E}(w_{m+1}, w')\\
        &\le  \left( \sum_{i = 0}^{m+1} \mathds{1}_{\xi(w_i) = v} \left( d^+_{G}(v) - \sum_{j=0}^{m} \tilde{E}(w_j, w_i) \right)\right) - \sum_{i=0}^{m+1} \mathds{1}_{\xi(w_i) = v} \tilde{E}(w_{m+1}, w_i)\\
        &= \sum_{i = 0}^{m+1} \mathds{1}_{\xi(w_i) = v} \left( d^+_{G}(v) - \sum_{j=0}^{m+1} \tilde{E}(w_j, w_i) \right),
    \end{align*}
where we used the second condition at the fourth step. 

\medskip
\noindent
\textbf{Case 2:} $v \ne \xi(w_{m+1})$ and $\max(B_{\sigma}(m+1)(v)  , E(g_{\sigma}(m+1),v)) = E(g_{\sigma}(m+1),v)$.

\medskip
\noindent
By the third condition, we have  \[
    \mathds{1}_{\xi(w) = v} \left(d^+_{G}(v) - \sum_{j=0}^{m} \tilde{E}(w_j, w)\right) \ge 0
    \]
for all $w \in \hat{V}$ and $m \in [[0,P-2]] $. Therefore, one immediately gets 
\begin{align*}
B_{\sigma}(m+2)(v) &=  0 \\ &\le \sum_{i = 0}^{m+1} \mathds{1}_{\xi(w_i) = v} \left( d^+_{G}(v) - \sum_{j=0}^{m} \tilde{E}(w_j, w_i) \right)\\
& = \sum_{i = 0}^{m+1} \mathds{1}_{\xi(w_i) = v} \left( d^+_{G}(v) - \sum_{j=0}^{m+1} \tilde{E}(w_j, w_i) \right).
\end{align*}

\medskip
\noindent
\textbf{Case 3:} $v = \xi(w_{m+1})$.

\medskip
\noindent
Here we can use the fact that $\tilde{E}(w_j, w_i) = 0$ for all $j < i$ as well as $\tilde{E}(w_j, w_i) = 0$ if $\xi(w_i) = \xi(w_j)$ in order to get

\begin{align*}
        B_{\sigma}(m+2)(v) &= B_{\sigma}(m+1)(v) + d^+_G(v)\\
        &\le \left( \sum_{i = 0}^{m} \mathds{1}_{\xi(w_i) = v} \left( d^+_{G}(v) - \sum_{j=0}^{m} \tilde{E}(w_j, w_i) \right)\right) + d^+_G(v)\\
        &= \left( \sum_{i = 0}^{m} \mathds{1}_{\xi(w_i) = v} \left( d^+_{G}(v) - \sum_{j=0}^{m+1} \tilde{E}(w_j, w_i) \right)\right) + d^+_G(v) - \sum_{j=0}^{m+1} \tilde{E}(w_j, w_{m+1})\\
        &= \sum_{i = 0}^{m+1} \mathds{1}_{\xi(w_i) = v} \left( d^+_{G}(v) - \sum_{j=0}^{m+1} \tilde{E}(w_j, w_i) \right).
    \end{align*}
This completes the induction step. Now, by applying $(*)$ to $m = P-1$ and $v_k \in V$, we obtain
    \begin{align*}
        B_{\sigma}(P)(v_k) &\le \sum_{i = 0}^{P-1} \mathds{1}_{\xi(w_i) = v_k} \left( d^+_{G}(v_k) - \sum_{j=0}^{P-1} \tilde{E}(w_j, w_i) \right) = \sum_{i = 0}^{P-1} \mathds{1}_{\xi(w_i) = v_k} \left( d^+_{G}(v_k) - d^{-}_{\tilde{G}}(w_i) \right)\\
        &= \sum_{l=1}^{v_G(k)} \left(  d^+_{G}(v_k) - d^{-}_{\tilde{G}}(v_{k,l})\right) = d^+_G(v_k)*v_G(k) - \sum_{l=1}^{v_G(k)}d^{-}_{\tilde{G}}(v_{k,l}).
    \end{align*}
Therefore, by Theorem~\ref{Second formula} and Corollary~\ref{globalgrowth}, we deduce that 

\begin{align*}
        c& \le \sum_{v \in V} B_{\sigma}(P-1)(v)\\
        &\leq \sum_{v \in V} B_{\sigma}(P)(v) = \sum_{k = 1}^{N} B_{\sigma}(P)(v_k)\\
        &\le \sum_{k = 1}^{N} \left( d^+_G(v_k)*v_G(k) - \sum_{l=1}^{v_G(k)}d^{-}_{\tilde{G}}(v_{k,l}) \right)\\
        &= \left( \sum_{k = 1}^{N} d^+_G(v_k)*v_G(k) \right) - \sum_{v \in \hat{V}} d^{-}_{\tilde{G}}(v)\\
        &= \left( \sum_{k = 1}^{N} d^+_G(v_k)*v_G(k) \right) - a,
    \end{align*}
which concludes the first half of the proof. We shall now proceed with the second inequality, i.e. $\left( \sum_{k = 1}^{N} d^+_G(v_k)*v_G(k) \right) - a \leq c.$ Here we use this key observation: any primitive function is of the form $m \mapsto \xi(w_m)$ for a certain ordering $w_0,\dots,w_{P-1}$ of $\hat{V}$. Hence, by Theorem~\ref{Second formula}, we can choose an optimal ordering and the associated $\sigma$ so that $$c = \sum_{v \in V} B_{\sigma}(P-1)(v) = \sum_{v \in V} B_{\sigma}(P)(v),$$ the second equality being explained in Lemma~\ref{gaingood}. We define $\tilde{a}$ as the biggest number of edges of $\hat{G}$ that one can keep so that the associated subgraph $\tilde{G} = (\hat{V},\tilde{E}) $ respects the three following conditions:
    \begin{enumerate}
        \item[a)] For all $i < j$, $\tilde{E}(w_i, w_j) = 0 $
        \item[b)] For all $v \in V$ and $w \in \hat{V}$, $\sum_{w' \in \hat{V}} \mathds{1}_{\xi(w') = v} \tilde{E}(w,w') \le E(\xi(w), v)$
        \item[c)] For all $w \in \hat{V}$, $d^{-}_{\tilde{G}}(w) \le d^+_G(\xi(w)) $.
    \end{enumerate}
It is clear that a) implies acyclicity and therefore $\tilde{a} \leq a.$ In order to conclude, we will need to apply Lemma~\ref{stronglemma} to $\tilde{G}$ and, for that purpose, we have to show that $\tilde{G}$ satisfies the conditions $(1)$ to $(4).$ Obviously, we can pair $(1)$ with $a)$, $(2)$ with $b)$, $(3)$ with $c)$ and $(4)$ with the maximality of $\tilde{a}$. Hence, by applying Lemma~\ref{stronglemma} to $\tilde{G}$, we get \[
    B_{\sigma}(P)(v_i) \ge d^+_G(v_i)*v_G(i) - \sum_{l=1}^{v_G(i)}d^{-}_{\tilde{G}}(v_{i,l})
    \] for all $v_i \in V$ and, therefore,
    \begin{align*}
        c &= \sum_{i = 1}^{N} B_{\sigma}(P)(v_i)\\
        &\ge \sum_{i = 1}^{N} \left(d^+_G(v_i)*v_G(i) - \sum_{l=1}^{v_G(i)}d^{-}_{\tilde{G}}(v_{i,l}) \right)\\
        &= \left(\sum_{i=1}^N d^+_G(v_i) * v_G(i) \right) - \tilde{a}\\
        &\ge \left(\sum_{i=1}^N d^+_G(v_i) * v_G(i) \right) - a.
    \end{align*}
Hence the conclusion.
\end{proof}

\begin{remark}
It is legitimate to call the number $\left(\sum_{i=1}^N d^+_G(v_i) * v_G(i) \right) - a$ a \textit{primitive feedback number} since it gives back the feedback number if $G$ is Eulerian. Indeed, in that case, one has $v_G = (1,1,\dots,1)$ and, as a consequence, $\hat{G} = G$ and $\xi = \text{id}.$ The second condition of Theorem~\ref{Genfeednum} is thus trivially satisfied, while the third one is also satisfied since $d^-_{\tilde{G}}(w) \leq d^-_{G}(w) = d^+_G(w)$ by the Eulerian hypothesis. Hence, $c$ is simply equal to $\left(\sum_{i=1}^N d^+_G(v_i)\right)-a$ where $a$ is the biggest number of edges of $G$ that one can keep so that the associated subgraph is acyclic. This is exactly the feedback number of $G$.
\end{remark}

\end{document}